\newcommand{\E}{\mathbb E}
\newcommand{\R}{\mathbb{R}}
\newcommand{\N}{\mathbb{N}}
\renewcommand{\P}{\mathbb{P}}
\renewcommand{\S}{\mathbb{S}}
\newcommand{\range}{\mathop{\mathrm{Range}}}
\newcommand{\Var}{\mathop{\mathrm{Var}}\nolimits}
\newcommand{\Cov}{\mathop{\mathrm{Cov}}\nolimits}
\newcommand{\conv}{\mathop{\mathrm{conv}}\nolimits}
\newcommand{\eee}{{\rm e}}
\newcommand{\dd}{{\rm d}}
\newcommand{\eps}{\varepsilon}
\newcommand{\eqdistr}{\stackrel{d}{=}}
\newcommand{\todistr}{\overset{d}{\underset{n\to\infty}\longrightarrow}}
\newcommand{\ton}{\overset{}{\underset{n\to\infty}\longrightarrow}}
\newcommand{\bx}{\mathbf{x}}
\theoremstyle{plain}
\newtheorem{theorem}{Theorem}[section]
\newtheorem{corollary}[theorem]{Corollary}
\newtheorem{proposition}[theorem]{Proposition}
\theoremstyle{definition}
\newtheorem{example}[theorem]{Example}
\theoremstyle{remark}
\newtheorem{remark}[theorem]{Remark}
\begin{document}

\author{Zakhar Kabluchko}
\address{Zakhar Kabluchko, Institut f\"ur Mathematische Statistik,
Universit\"at M\"unster,
Orl\'eans--Ring 10,
48149 M\"unster, Germany}
\email{zakhar.kabluchko@uni-muenster.de}

\author{Alexander E.\ Litvak}
\address{Alexander E.\ Litvak, Department of Mathematical and Statistical Sciences, University of Alberta,  Edmonton, AB, T6G 2G1}

\email{aelitvak@gmail.com}

\author{Dmitry Zaporozhets}
\address{Dmitry Zaporozhets\\
St.\ Petersburg Department of
Steklov Institute of Mathematics,
Fontanka~27,
 191011 St.\ Petersburg,
Russia}
\email{zap1979@gmail.com}

\title[Mean width of regular polytopes]{Mean width of regular polytopes and expected maxima of correlated Gaussian variables}
\keywords{Mean width, intrinsic volumes, regular simplex, regular crosspolytope, maxima of Gaussian processes, random projections, extreme value theory}
\subjclass[2010]{Primary, 52A23; secondary, 52A39, 	52A20, 60G15, 60G70, 46B06}
\thanks{
The work of the third author is supported by the grant RFBR
13-01-00256 and by the Program of Fundamental
Researches of Russian Academy of Sciences ``Modern Problems of
Fundamental Mathematics''.}

\begin{abstract}
An old conjecture states that among all simplices inscribed in the unit sphere the regular one has the maximal mean width. An equivalent formulation is that for any centered Gaussian vector $(\xi_1,\dots,\xi_n)$ satisfying
$\E\xi_1^2= \dots =\E\xi_n^2=1$ one has
$$
 \E\,\max\{\xi_1,\dots,\xi_n\}\leq\sqrt{\frac{n}{n-1}}\, \E\,\max\{\eta_1,\dots,\eta_n\},
$$
where $\eta_1,\eta_2,\dots,$ are independent standard Gaussian variables.
Using this probabilistic interpretation we derive an \emph{asymptotic}
version of the conjecture. We also show that the mean width of the regular
simplex with $2n$ vertices is remarkably close to the mean width of the
regular crosspolytope with the same number of vertices. Interpreted probabilistically, our
result states that
$$
  1\leq\frac{\E\,\max\{|\eta_1|,\dots,|\eta_n|\}}{\E\,\max\{\eta_1,\dots,\eta_{2n}\}}
  \leq\min\left\{\sqrt{\frac{2n}{2n-1}}, \, 1+\frac{C}{n\, \log n} \right\},
$$
where $C>0$ is an absolute constant.
We also compute the higher moments of the projection length $W$ of the regular cube, simplex and crosspolytope onto a line with random direction, thus proving several formulas conjectured by S.~Finch. Finally, we prove distributional limit theorems for the length of random projection as the dimension goes to $\infty$. In the case of the $n$-dimensional unit cube $Q_n$, we prove that
$$
W_{Q_n} -  \sqrt{\frac{2n}{\pi}} \todistr {\mathcal{N}} \left(0, \frac{\pi-3}{\pi}\right),
$$
whereas for the simplex and the crosspolytope the limiting distributions are related to the Gumbel double exponential law.
\end{abstract}

\maketitle

\section{Conjecture on the mean width}
\subsection{Introduction}
The mean width of a compact convex body $K\subset \R^n$ is the expected length of a projection of this body onto a line with uniformly chosen, random direction. That is, the mean width equals $\E\, [W_K]$, where
$$
W_K = \sup_{t\in K} \langle U,t \rangle - \inf_{t\in K} \langle U,t \rangle,
$$
and $U$ is uniformly distributed on the unit sphere $\S^{n-1}\subset \R^n$.

How should $n+1$ points be arranged on the $(n-1)$-dimensional unit sphere so as to maximize the mean width of their convex hull? An old conjecture states (see~\cite[Section~9.10.2]{GK94}) that the arrangement must be \emph{regular}.

The mean width is just a multiple of the first
intrinsic volume $V_1$, namely
\begin{equation}\label{eq:mean_width_V1}
V_1(K)= \sqrt \pi\, \frac{\Gamma(\frac{n+1}{2})}{\Gamma(\frac{n}{2})}\E\, [W_K];
\end{equation}
see~\cite[p.~210]{Sch-book}.
The first intrinsic volume has the advantage of not depending on the dimension of the surrounding space.
Hence the conjecture can be formulated as follows:
\begin{equation}\label{2011}
 \sup_{x_1,\dots,x_{n+1}\in\S^{n-1}}V_1(\conv(x_1,\dots,x_{n+1}))=V_1(T_n),
\end{equation}
where $T_n$ is a regular simplex with $n+1$ vertices inscribed in the sphere $\S^{n-1}$, and $\conv$ denotes the convex hull.

This question is surprisingly hard.  Several authors~\cite{eG52, aB63, aB65, cW68} assumed the existence of a proof, but the problem is still open. Besides very natural formulation in Convex Geometry this problem is very important in Information Theory,
as it is closely related to the the long-standing simplex code conjecture \cite{code}.

\subsection{Probabilistic statement}
The conjecture can be reformulated in terms of Gaussian processes in the following way.
Throughout the paper, $\eta=(\eta_1,\dots,\eta_n)$ denotes a standard Gaussian vector
in $\R^n$. Consider a compact set $K\subset\R^n$. Using the fact that the norm and the direction of $\eta$ are independent, it is not difficult to derive Sudakov's formula
\begin{equation}\label{1802}
 V_1(\conv(K))=\sqrt{2\pi}\, \E\,\sup_{x\in K}\langle \eta,x\rangle
\end{equation}
(see~\cite{vS76} for details and for a generalization to the infinite-dimensional case, or Theorem~\ref{theo:sudakov_general} in the present paper for a more general result). This probabilistic interpretation of the first intrinsic volume allows to reformulate the conjecture as follows.
\begin{proposition}
For every integer $n\geq2$ the following two statements are equivalent:
\begin{itemize}
\item[(i)] One has
\begin{equation}\label{2004}
\sup_{x_1,\dots,x_{n}\in\S^{n-2}}V_1(\conv(x_1,\dots,x_{n}))=V_1(T_{n-1}),
\end{equation}
and the equality is attained iff $x_1,\ldots,x_n$ are vertices of a regular simplex.
\item [(ii)]
For every centered Gaussian vector $(\xi_1,\dots,\xi_n)$ satisfying
\begin{equation}\label{eq:unit_var}
\E\xi_1^2=\dots =\E\xi_n^2=1,
\end{equation}
one has
\begin{equation}\label{eq:conj_for_exp}
\E\,\max\{\xi_1,\dots,\xi_n\}\leq\sqrt{\frac{n}{n-1}}\, \E\,\max\{\eta_1,\dots,\eta_n\},
\end{equation}
and the equality is attained iff $\E\, [\xi_i\xi_j] = -1/(n-1)$ for all $i\neq j$.
\end{itemize}
\end{proposition}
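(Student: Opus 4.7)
The translation between (i) and (ii) is furnished by Sudakov's formula \eqref{1802}, which identifies $V_1(\conv(x_1,\dots,x_n))$ with $\sqrt{2\pi}\,\E\max_i\langle\eta',x_i\rangle$ for a standard Gaussian $\eta'$ in any Euclidean space containing the $x_i$. A brief preparatory computation evaluates the right-hand side of \eqref{eq:conj_for_exp} geometrically: using the symmetric realization $v_i=\sqrt{n/(n-1)}\,(e_i-\bar e)\in\R^n$ of the regular simplex, the identity $\langle\eta,v_i\rangle=\sqrt{n/(n-1)}\,(\eta_i-\bar\eta)$ combined with $\E\bar\eta=0$ gives $\E\max_i\langle\eta,v_i\rangle=\sqrt{n/(n-1)}\,\E\max_i\eta_i$, hence $V_1(T_{n-1})=\sqrt{2\pi n/(n-1)}\,\E\max_i\eta_i$. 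Under this identification the probabilistic bound \eqref{eq:conj_for_exp} is exactly the geometric bound $V_1(\conv(x_1,\dots,x_n))\le V_1(T_{n-1})$.

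Given this, the direction (ii)$\Rightarrow$(i) is immediate: for $x_1,\dots,x_n\in\S^{n-2}$ and $\eta'$ a standard Gaussian on $\R^{n-1}$, the variables $\xi_i:=\langle\eta',x_i\rangle$ form a centered Gaussian vector with unit variances (since $\|x_i\|=1$), so \eqref{eq:conj_for_exp} combined with \eqref{1802} yields $V_1(\conv(x_1,\dots,x_n))\le V_1(T_{n-1})$, with equality forcing $\langle x_i,x_j\rangle=\E\xi_i\xi_j=-1/(n-1)$ and hence identifying $\{x_i\}$ as a regular simplex.

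The substantive work lies in the reverse direction (i)$\Rightarrow$(ii). Writing $\Cov(\xi)=X^TX$ we realize $\xi_i=\langle\eta',x_i\rangle$ in distribution for unit vectors $x_1,\dots,x_n$ in some $\R^m$; the catch is that these vectors need not lie on $\S^{n-2}$, most notably when $m=n$, since then the $x_i$ are linearly independent in $\R^n$ and admit no copy in $\R^{n-1}$. To reduce to (i), I would translate and rescale by the circumcenter. Assuming the $x_i$ are not all equal (the degenerate case gives $\E\max_i\xi_i=0$ and is trivial), let $p$ be the orthogonal projection of $0$ onto the affine hull $A$ of $\{x_i\}$. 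Since $x_i-p\in A-p$ while $p\in(A-p)^\perp$, the Pythagorean identity yields $\|x_i-p\|^2=\|x_i\|^2-\|p\|^2=1-\|p\|^2$, so $p$ is simultaneously the circumcenter of $\{x_i\}$ and the common circumradius $R:=\sqrt{1-\|p\|^2}$ satisfies $0<R\le 1$. The rescaled vertices $y_i:=(x_i-p)/R$ are then unit vectors in the linear subspace $A-p$, whose dimension is at most $n-1$, hence embed into a copy of $\S^{n-2}$; by translation invariance and $1$-homogeneity of $V_1$,
\[
V_1(\conv(x_1,\dots,x_n))=R\cdot V_1(\conv(y_1,\dots,y_n))\le R\cdot V_1(T_{n-1})\le V_1(T_{n-1}),
\]
where the middle inequality is (i) applied to $\{y_i\}$, and Sudakov then delivers \eqref{eq:conj_for_exp}. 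Equality requires both $R=1$ (hence $p=0$ and $y_i=x_i$) and $\{y_i\}$ regular, which rewrites as $\E\xi_i\xi_j=-1/(n-1)$. The crucial step, and the one whose failure would sink the argument, is the Pythagorean observation: it is precisely the hypothesis $\|x_i\|=1$ that forces the origin-projection $p$ to serve as circumcenter and supplies the scalar bound $R\le 1$.
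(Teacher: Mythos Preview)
Your proof is correct and follows essentially the same route as the paper. Both arguments rest on Sudakov's formula, the computation $V_1(T_{n-1})=\sqrt{2\pi n/(n-1)}\,\E\max_i\eta_i$ via the realization $v_i=\sqrt{n/(n-1)}(e_i-\bar e)$, and the dimension-reduction fact that $n$ unit vectors lie on an $(n-2)$-sphere of radius at most~$1$. The paper packages the last point tersely as the identity $\sup_{\S^{n-2}}=\sup_{\S^{n-1}}$ (its \eqref{1603}) and then invokes a bijection between tuples on $\S^{n-1}$ and unit-variance Gaussian vectors; you instead treat the two implications separately and unpack the dimension reduction explicitly via the circumcenter/Pythagoras computation. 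Your version makes the equality analysis (the factor $R\le 1$) more transparent, at the cost of being slightly longer; the paper's bijective formulation is cleaner but leaves that geometric step implicit.
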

\begin{proof}
First of all note that
\begin{equation}\label{1603}
\sup_{x_1,\dots,x_{n}\in\S^{n-2}}V_1(\conv(x_1,\dots,x_{n}))=\sup_{y_1,\dots,y_{n}\in\S^{n-1}}V_1(\conv(y_1,\dots,y_{n}))
\end{equation}
because there is an $(n-1)$-dimensional affine subspace (and hence, an $(n-2)$-dimensional sphere of radius at most $1$) containing $y_1,\ldots,y_n$.
Therefore, we can restate (i) as follows:
\begin{equation}\label{2004_restate}
\sup_{y_1,\dots,y_{n}\in\S^{n-1}}V_1(\conv(y_1,\dots,y_{n}))=V_1(T_{n-1}),
\end{equation}
and the equality is attained iff $y_1,\ldots,y_n$ are vertices of a regular simplex centered at the origin. Let $\{e_1,\dots,e_n\}$ be a standard orthonormal basis in $\R^n$. As a realization of such a simplex we can take the convex hull of the points
$$
v_i := \sqrt\frac{n}{n-1}\left(e_i -\frac{e_1+\ldots+e_n}{n}\right), \quad i=1,\ldots,n.
$$
To see this, note that the $(n-1)$-dimensional regular  simplex
$$
S_{n-1}:=\conv(e_1,\dots,e_n)
$$
can be inscribed in an $(n-2)$-dimensional sphere of radius $\sqrt{(n-1)/n}$ centered at $(e_1+\ldots+e_n)/n$. It follows from~\eqref{1802} applied to $K=S_{n-1}$ that
\begin{equation}\label{1833}
V_1(T_{n-1}) = \sqrt{\frac{n}{n-1}}V_1(S_{n-1}) = \sqrt{2\pi}\sqrt{\frac{n}{n-1}}\E\,\max\{\eta_1,\dots,\eta_n\}.
\end{equation}

To any points $y_1,\dots,y_{n}\in\S^{n-1}$ we associate a centered Gaussian vector $(\xi_1,\dots,\xi_n)$  such that $\E\xi_1^2=\dots =\E\xi_n^2=1$ via
$$
\xi_1:=\langle \eta, y_1\rangle, \ldots, \xi_n:=\langle \eta, y_n\rangle.
$$
If we agree to identify two Gaussian vectors if they have the same distribution and two tuples $(y_1,\ldots,y_n)$ and $(y_1',\ldots,y_n')$ if $\langle y_i, y_j\rangle = \langle y_i',y_j'\rangle$ for all $i,j\in \{1,\ldots,n\}$, then this correspondence becomes one-to-one because $\Cov(\xi_i,\xi_j) = \langle y_i, y_j\rangle$.
It follows from~\eqref{1802} that
$$
\sqrt{2\pi}\, \E\,\max\{\xi_1,\dots,\xi_n\}=V_1(\conv(y_1,\dots,y_n)).
$$
The Gaussian vector corresponding to the points $v_1,\ldots,v_n$ satisfies
$$
\E\, [\xi_i \xi_j] = \langle v_i, v_j\rangle =-1/(n-1), \quad i\neq j.
$$
Taken together, the above considerations show the equivalence of (i) and (ii).
\end{proof}

\subsection{Asymptotic version of the conjecture}
We now show that~\eqref{2011} holds \emph{asymptotically}.

\begin{theorem}
For some absolute constant $C>0$ and all $n\in\N$,
\begin{align*}
V_1(T_n)
\leq
\sup_{x_1,\dots,x_{n+1}\in\S^{n-1}} V_1(\conv(x_1,\dots,x_{n+1}))
\leq
V_1(T_n)\left(1+C\frac{\log\log n}{\log n}\right).
  \end{align*}
\end{theorem}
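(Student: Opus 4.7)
The plan is to reduce the theorem to a bound on expected maxima of Gaussians via the probabilistic reformulation already established, and then to exploit the sharp asymptotics $\E\max\{\eta_1,\ldots,\eta_m\}\sim\sqrt{2\log m}$ together with the universal upper bound $\E\max\{\xi_1,\ldots,\xi_m\}\leq\sqrt{2\log m}$ for any centered Gaussian $m$-vector with unit variances. The left inequality of the theorem is immediate since $T_n$ is one admissible configuration. For the right inequality, associate to arbitrary $x_1,\ldots,x_{n+1}\in\S^{n-1}$ the centered Gaussian vector $\xi_i=\langle\eta,x_i\rangle$ with $\E\xi_i^2=1$. By Sudakov's formula~\eqref{1802} and the computation~\eqref{1833} (applied with $n$ replaced by $n+1$),
\begin{equation*}
\frac{V_1(\conv(x_1,\ldots,x_{n+1}))}{V_1(T_n)} = \sqrt{\tfrac{n}{n+1}}\,\frac{\E\max\{\xi_1,\ldots,\xi_{n+1}\}}{\E\max\{\eta_1,\ldots,\eta_{n+1}\}},
\end{equation*}
so the theorem reduces to bounding the Gaussian ratio on the right by $1+C\log\log n/\log n$.

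For the numerator I would invoke the standard sub-Gaussian estimate: by Jensen applied to $\exp(\lambda\,\max_i\xi_i)$ and optimizing in $\lambda>0$, one gets $\E\max\{\xi_1,\ldots,\xi_{n+1}\}\leq\sqrt{2\log(n+1)}$ whenever $\E\xi_i^2=1$ (no independence or specific correlation structure is used). For the denominator I would use the classical extreme-value asymptotics for i.i.d.\ standard Gaussians,
\begin{equation*}
\E\max\{\eta_1,\ldots,\eta_m\} = \sqrt{2\log m}\left(1-O\!\left(\tfrac{\log\log m}{\log m}\right)\right),
\end{equation*}
which follows from either the explicit formula $\E\max=\int_{-\infty}^{\infty}\bigl(1-\Phi(t)^m\bigr)\,dt-\int_{-\infty}^{0}\Phi(t)^m\,dt$ and the Mills-ratio asymptotics near the level $t=\sqrt{2\log m}-\tfrac{\log\log m+\log 4\pi}{2\sqrt{2\log m}}$, or directly from the Fisher--Tippett--Gnedenko theorem together with a standard uniform-integrability argument. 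Only the lower bound $\E\max\{\eta_1,\ldots,\eta_m\}\geq\sqrt{2\log m}-C\sqrt{\log\log m/\log m}\cdot?$ — actually the cleaner and sufficient statement is $\E\max\geq\sqrt{2\log m}\bigl(1-C\log\log m/\log m\bigr)$ — is needed.

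Combining the two bounds with $m=n+1$,
\begin{equation*}
\frac{\E\max\{\xi_1,\ldots,\xi_{n+1}\}}{\E\max\{\eta_1,\ldots,\eta_{n+1}\}}\leq \frac{\sqrt{2\log(n+1)}}{\sqrt{2\log(n+1)}\bigl(1-C\log\log(n+1)/\log(n+1)\bigr)}\leq 1+\tfrac{C'\log\log n}{\log n},
\end{equation*}
and since the extra factor $\sqrt{n/(n+1)}\leq 1$ only improves the estimate, the theorem follows by taking the supremum over $x_1,\ldots,x_{n+1}\in\S^{n-1}$. The routine $n=2,3$ cases can be absorbed into the constant $C$.

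The main obstacle is cleanly extracting the quantitative error term $O(\log\log m/\log m)$ in the lower bound on $\E\max\{\eta_1,\ldots,\eta_m\}$; this is a standard but delicate computation with $\Phi^m$ and its complement, where one must locate the mode of the limit distribution carefully and argue uniform integrability of the normalized maximum before passing from distributional to expectation asymptotics. Everything else — the reduction via the proposition and the universal sub-Gaussian upper bound — is essentially a one-line appeal to classical tools.
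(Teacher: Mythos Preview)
Your proposal is correct and follows essentially the same route as the paper: reduce via Sudakov's formula to the ratio of Gaussian maxima, bound the numerator by the universal sub-Gaussian estimate $\E\max_i\xi_i\leq\sqrt{2\log m}$ (proved exactly as you indicate, via Jensen on $\exp(\lambda\cdot)$), and bound the denominator below using the extreme-value asymptotics $\E\max_i\eta_i=\sqrt{2\log m}(1-O(\log\log m/\log m))$, for which the paper simply cites the literature. The only cosmetic difference is that the paper shifts the index from $n+1$ to $n$ points via~\eqref{1603} before carrying out the estimate, whereas you keep $n+1$ throughout; your observation that $\sqrt{n/(n+1)}\leq 1$ is harmless matches the paper's handling of this factor.
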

\begin{proof}
The first inequality is trivial because we can take $x_1,\ldots, x_{n+1}$ to be the vertices of $T_n$.
Replacing $n$ by $n-1$ and using~\eqref{1603} we can restate that second inequality as follows: For all $n\geq 2$,
$$
\sup_{x_1,\dots,x_{n}\in\S^{n-1}} V_1(\conv(x_1,\dots,x_{n}))
\leq
V_1(T_{n-1})\left(1+C\frac{\log\log n}{\log n}\right)
$$
Fix $x_1,\dots,x_n\in\S^{n-1}$. For $k=1,\dots,n$ define Gaussian random variables $\xi_k:=\langle x_k,\eta\rangle$ and note that $\xi_k$ has zero mean and unit variance. It is known  (see, e.g., \cite[p.~138]{Chatterjee_book}) that
\begin{equation}\label{2104}
\E\,\max\{\xi_1,\dots,\xi_n\}\leq\sqrt{2\log n}.
\end{equation}
We provide a proof for the sake of completeness.
For $t>0$ one has
\begin{align*}
\exp\left( t\, \E\,\max\{\xi_1,\dots,\xi_n\} \right)    &
\leq \E\, \exp\left( t\max\{\xi_1,\dots,\xi_n\}\right)   \\
&=\E\,\max\{\eee^{t\xi_1},\dots,\eee^{t\xi_n}\}\leq\sum_{k=1}^n \E \, \eee^{t\xi_k}=n\eee^{t^2/2}.
\end{align*}
Letting $t=\sqrt{2\log n}$ yields \eqref{2104}.

On the other hand, it is well-known in the theory of extreme values, see~\cite[Theorem~1.5.3 on p.~14]{leadbetter_etal_book} and~\cite{pickands_moments},  that
 \begin{equation}\label{maximu}
\E\,\max\{\eta_1,\dots,\eta_n\}=\sqrt{2\log n}-O\left(\frac{\log\log n}{\sqrt{2\log n}}\right),\quad n\to\infty.
 \end{equation}
Using~\eqref{1802} and~\eqref{2104}, we obtain
$$
V_1(\conv(x_1,\dots,x_{n}))
= \sqrt{2 \pi}\,  \E \, \max\{\xi_1,\ldots,\xi_n\}
\leq \sqrt{4\pi\log n}.
$$
Combining this with~\eqref{1833} and~\eqref{maximu} gives
\begin{align*}
V_1(\conv(x_1,\dots,x_{n}))
&\leq V_1(T_{n-1})\cdot\sqrt{\frac{n-1}{n}}\left(1 - O\left(\frac{\log\log n}{\log n}\right)\right)^{-1}\\
&=V_1(T_{n-1})\cdot\left(1+O\left(\frac{\log\log n}{\log n}\right)\right),
\end{align*}
as $n\to\infty$. This proves the claim.
\end{proof}

\section{Regular simplex and regular crosspolytope}
In this section we compare the mean width of the regular simplex $T_{2n-1}$ to the mean width of the regular $n$-dimensional crosspolytope defined by
$$
C_n=\conv(\pm e_1,\dots,\pm e_n).
$$
Note that both $T_{2n-1}$ and $C_{n}$ (which can be considered as a degenerate simplex) have $2n$ vertices and can be inscribed in $\S^{2n-2}$. We will show that conjecture~\eqref{2011} is true in this special case, that is $V_1(C_{n}) \leq V_1(T_{2n-1})$.  Moreover, we will prove a lower bound which shows that  the mean width of $T_{2n-1}$ is remarkably close to the mean width of $C_n$.

\subsection{Mean width and extreme values}\label{subsec:extremes}
It follows from Sudakov's formula~\eqref{1802}, see also~\eqref{1833}, that
\begin{align}
&V_1(T_{n-1})
=
\sqrt{2\pi}\sqrt{\frac{n}{n-1}}\E\,\max\{\eta_1,\dots,\eta_n\}
=\sqrt{\frac{n}{n-1}} V_1(S_{n-1}),
\label{1833_simplex}\\
&V_1(C_n)
= \sqrt {2\pi}\, \E\,\max\{\pm \eta_1,\dots,\pm \eta_{n}\}
=\sqrt {2\pi}\, \E\,\max\{|\eta_1|,\dots,|\eta_{n}|\},
\label{1833_cross}
\end{align}
where we recall that $S_{n-1} = \conv(e_1,\ldots,e_n)$.
It is well-known in the theory of extreme values~\cite[Theorem~1.5.3 on p.~14]{leadbetter_etal_book} that
\begin{align}
&
\lim_{n\to\infty} \P\left[\max\{ \eta_1,\ldots, \eta_n\}\leq  u_n + \frac{x}{\sqrt{2\log n}} \right] =\eee^{-\eee^{-x}},\label{eq:max_gumbel1}\\
&\lim_{n\to\infty} \P\left[\max\{ |\eta_1|,\ldots, |\eta_n|\}\leq  u_{2n} + \frac{x}{\sqrt{2\log n}} \right] =\eee^{-\eee^{-x}}, \label{eq:max_gumbel2}
\end{align}
where $u_n$ is any sequence satisfying $\sqrt{2\pi} u_n \eee^{u_n^2/2} \sim n$, for example\footnote{$a_n \sim b_n$ means $a_n/b_n\to 1$ as $n\to \infty$.}
\begin{equation}\label{eq:def_u_n}
u_n = \sqrt{2\log n} - \frac{\frac 12 \log \log n +\log (2\sqrt{\pi})}{\sqrt{2\log n}}.
\end{equation}
Note that~\eqref{eq:max_gumbel2} (together with~\eqref{eq:max_gumbel1}) expresses the fact that the minimum and the maximum of $\eta_1,\ldots,\eta_n$ become asymptotically independent; see~\cite[Theorem~1.8.3 on p.~28]{leadbetter_etal_book}.
Taking the expectation (which is justified by~\cite{pickands_moments}) and noting that the expectation of the Gumbel distribution on the right-hand side of~\eqref{eq:max_gumbel1} and~\eqref{eq:max_gumbel2} is the Euler constant $\gamma$, we obtain the large $n$ asymptotics
\begin{align}
V_1(T_{n-1}) &= \sqrt{2\pi}\left( u_n + \frac{\gamma+o(1)}{\sqrt{2\log n}}\right), \quad n\to\infty,\label{eq:asympt_simplex} \\
V_1(C_n) &= \sqrt{2\pi}\left( u_{2n} + \frac{\gamma+o(1)}{\sqrt{2\log n}}\right), \quad n\to\infty. \label{eq:asympt_cross}
\end{align}
These formulas are known; see~\cite{affentranger_schneider}, \cite[p.~5]{finch_simplex}, \cite[p.~8]{finch_cross}.

\subsection{Comparing $V_1(T_{2n-1})$ and $V_1(C_n)$}
We are going to show that distance between $V_1(T_{2n-1})$ and $V_1(C_n)$ is in fact much smaller than
the bound $o(1/\sqrt{2\log n})$ implied by~\eqref{eq:asympt_simplex} and~\eqref{eq:asympt_cross}.
First we state the corresponding probabilistic result.
\begin{theorem}\label{1053}
If $\eta_1,\ldots, \eta_{2n}$ are independent standard Gaussian variables, then
\begin{equation*}
\E\,\max\{\eta_1,\dots,\eta_{2n}\}\leq
\E\,\max\{|\eta_1|,\dots,|\eta_n|\}\leq
\sqrt{\frac{2n}{2n-1}}\, \E\,\max\{\eta_1,\dots,\eta_{2n}\}.
\end{equation*}
\end{theorem}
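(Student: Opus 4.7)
My plan is to treat the two inequalities by distinct methods.

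\textbf{Lower bound.} I would derive $\E\max\{\eta_1,\ldots,\eta_{2n}\}\leq \E\max\{|\eta_1|,\ldots,|\eta_n|\}$ from Slepian's inequality. Let $\xi=(\eta_1,-\eta_1,\ldots,\eta_n,-\eta_n)\in\R^{2n}$ be the centered Gaussian vector with unit-variance components whose pairwise covariances lie in $\{-1,0\}$. Since every off-diagonal entry of $\Cov(\xi)$ is $\leq 0$, which equals the corresponding covariance entry for an iid standard Gaussian vector in $\R^{2n}$, Slepian's inequality yields $\E\max_k\xi_k\geq\E\max\{\eta_1,\ldots,\eta_{2n}\}$, and the identity $\max_k\xi_k=\max_i|\eta_i|$ finishes the argument.

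\textbf{Upper bound.} By Sudakov's formula~\eqref{1802}, the inequality is equivalent to $V_1(C_n)\leq V_1(T_{2n-1})$, a special case of the (still open) general simplex conjecture. The standard Gaussian comparison tools are unavailable here: in the vector $\xi$ above the antipodal distance $\E(\xi_{2i-1}-\xi_{2i})^2=4$ strictly exceeds the simplex target distance $4n/(2n-1)$ for all $n\geq 2$, so Sudakov-Fernique fails, while the covariance differences change sign between antipodal and non-antipodal pairs, so Slepian also fails. My plan is therefore to compute both expectations directly from the distribution functions $F_{\max|\eta|}(t)=(2\Phi(t)-1)^n$ (for $t\geq 0$) and $F_{\max\eta}(t)=\Phi(t)^{2n}$, obtaining
\[
\E\max|\eta_i|=\int_0^\infty[1-(2\Phi-1)^n]\,dt,\qquad \E\max\{\eta_1,\ldots,\eta_{2n}\}=\int_0^\infty[1-\Phi^{2n}-(1-\Phi)^{2n}]\,dt.
\]
Writing $\bar\Phi=1-\Phi$ and using the binomial identity $\Phi^{2n}=((2\Phi-1)+\bar\Phi^2)^n=\sum_{k=0}^n\binom{n}{k}(2\Phi-1)^{n-k}\bar\Phi^{2k}$, the required bound reduces to verifying that the slack $\bigl(\sqrt{2n/(2n-1)}-1\bigr)\E\max\{\eta_1,\ldots,\eta_{2n}\}$ dominates the integral $\int_0^\infty\bigl[\bar\Phi^{2n}+\sum_{k=1}^n\binom{n}{k}(2\Phi-1)^{n-k}\bar\Phi^{2k}\bigr]dt$. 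I would carry out this estimate term by term using Mills ratio together with the normalization $\int_0^\infty\bar\Phi\,dt=1/\sqrt{2\pi}$.

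\textbf{Main obstacle.} The upper bound is delicate because $\sqrt{2n/(2n-1)}-1$ is only of order $1/n$, whereas every Gaussian comparison available off the shelf (including the asymptotic theorem of the previous section, which loses a $\log\log n/\log n$ factor) is much too crude. Matching this $O(1/n)$ precision is what forces the direct analysis of the explicit extreme-value integrals rather than any abstract Gaussian domination.
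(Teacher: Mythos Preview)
Your lower bound via Slepian's lemma is exactly what the paper does, so that part is fine.

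For the upper bound, however, you have only outlined an approach and not a proof. The reduction to the integral inequality
\[
\int_0^\infty\Bigl[\bar\Phi^{2n}+\sum_{k=1}^n\binom{n}{k}(2\Phi-1)^{n-k}\bar\Phi^{2k}\Bigr]\,dt
\;\le\;
\Bigl(\sqrt{\tfrac{2n}{2n-1}}-1\Bigr)\,\E\max\{\eta_1,\dots,\eta_{2n}\}
\]
is correct algebra, but the inequality itself is the entire content of the theorem and you have not verified it. Saying you ``would carry out this estimate term by term using Mills ratio'' is not enough: the right-hand side is only of order $\sqrt{\log n}/n$, and while the left-hand side is in fact smaller (of order $1/(n\sqrt{\log n})$, as Theorem~\ref{1053_asympt} later shows), turning that asymptotic observation into a non-asymptotic bound valid for \emph{every} $n\ge 2$, with the exact constant $\sqrt{2n/(2n-1)}$, is the whole difficulty. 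No mechanism is offered for summing the binomial tail or for handling small $n$.

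The paper takes a completely different route that sidesteps all explicit constants. It builds a one-parameter family of centered Gaussian vectors $\xi(t)$, $t\in[0,1]$, interpolating between $\sqrt{2n/(2n-1)}\,(\eta_1,\dots,\eta_{2n})$ at $t=0$ and $(\eta_1,-\eta_1,\dots,\eta_n,-\eta_n)$ at $t=1$, and shows that $t\mapsto \E\max_i\xi_i(t)$ is non-increasing. This is done via Chatterjee's smooth-max approximation $F_\beta(x)=\beta^{-1}\log\sum_i\eee^{\beta x_i}$: differentiating $\E F_\beta(\xi(t))$ in $t$ and integrating by parts reduces the sign question to a single correlation inequality
\[
\E\bigl[p_1(\xi(t))\,p_2(\xi(t))\bigr]\le \E\bigl[p_1(\xi(t))\,p_3(\xi(t))\bigr],
\]
which is then proved by an explicit symmetry/swap argument on the four-dimensional Gaussian density. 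The crucial feature of this interpolation is that the variances are allowed to vary with $t$ (they equal $2n/(t+2n-1)$), which is exactly what produces the factor $\sqrt{2n/(2n-1)}$ at the endpoint $t=0$; had one interpolated at fixed variance, the derivative would not be single-signed, for the reason you yourself note (antipodal vs.\ non-antipodal covariance differences have opposite signs). This monotonicity argument gives the inequality for every $n\ge 2$ without any numerical estimates, which your direct approach would have to supply.
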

The left hand-side inequality immediately follows from Slepian's lemma~\cite[Corollary~4.2.3 on p.~84]{leadbetter_etal_book} because the random vector $(\eta_1,\ldots,\eta_{2n})$ is uncorrelated, whereas the off-diagonal correlations of $(\eta_1,-\eta_1,\ldots, \eta_n,-\eta_n)$ are non-positive. The proof of the second estimate will be given in Section~\ref{sec:proof1}. 
Theorem~\ref{1053} together with~\eqref{1833_simplex} and~\eqref{1833_cross} implies the following
\begin{corollary}
For every $n\in\N$,
$$
\sqrt{\frac{2n-1}{2n}}V_1(T_{2n-1})\leq V_1(C_n)\leq V_1(T_{2n-1}).
$$
\end{corollary}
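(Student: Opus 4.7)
By Sudakov's formula~\eqref{1802} together with~\eqref{1833_simplex}--\eqref{1833_cross}, the right-hand inequality is equivalent to $V_1(C_n)\le V_1(T_{2n-1})$, i.e.\ to the simplex conjecture~\eqref{2011} specialized to the configuration $\{\pm e_1,\dots,\pm e_n\}$. In Gaussian form, writing $\xi=(\eta_1,-\eta_1,\ldots,\eta_n,-\eta_n)$ and $\zeta_k=\sqrt{2n/(2n-1)}\,(\eta'_k-\bar{\eta}')$ for independent $\eta'_1,\ldots,\eta'_{2n}\sim N(0,1)$, one needs $\E\max_k\xi_k\le\E\max_k\zeta_k$. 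Both vectors are standard-marginal Gaussian in $\R^{2n}$, but the off-diagonal covariances of $\xi$ take only the values $0$ (unpaired indices) and $-1$ (paired indices), while those of $\zeta$ are uniformly $-1/(2n-1)$. Since these do not compare entrywise, Slepian and Sudakov--Fernique comparisons are not directly available; this is the principal obstacle.

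My plan is to reduce to a one-variable moment inequality and exploit convexity. Set $m_k:=2\int_0^\infty t\,\phi(t)\,(2\Phi(t)-1)^k\,dt$, where $\phi,\Phi$ are the standard normal density and CDF. A binomial expansion of $\Phi^{2n-1}-(1-\Phi)^{2n-1}$ in powers of $2\Phi-1$ gives
\[
\E\max\{|\eta_i|\}=n\,m_{n-1},\qquad \E\max\{\eta_j\}_{j=1}^{2n}=n\cdot 2^{2-2n}\sum_{j=0}^{n-1}\binom{2n-1}{2j+1}m_{2j+1},
\]
so that the inequality becomes
\[
m_{n-1}\le\sqrt{\frac{2n}{2n-1}}\,\sum_{j=0}^{n-1}\alpha_j\,m_{2j+1},\qquad\alpha_j:=\binom{2n-1}{2j+1}\big/2^{2n-2},
\]
with $\sum_j\alpha_j=1$ and $\sum_j\alpha_j(2j+1)=(2n-1)/2$. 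The map $k\mapsto m_k$ is strictly decreasing and log-convex (hence convex) by Cauchy--Schwarz applied to its defining integral, and the base case $n=1$ yields the equality $m_0=\sqrt 2\,m_1=\sqrt{2/\pi}$, matching the identity $C_1=T_1=[-1,1]$. By Jensen's inequality, $\sum_j\alpha_j m_{2j+1}\ge m_{(2n-1)/2}$, so it is enough to close the residual gap between $m_{n-1}$ and $\sqrt{2n/(2n-1)}\,m_{(2n-1)/2}$.

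The delicate point is that this plain Jensen bound is asymptotically just too weak: since $m_k\sim\sqrt{2\log k}/k$ for large $k$, one finds $m_{n-1}/m_{(2n-1)/2}\sim 1+1/(2n)$, whereas $\sqrt{2n/(2n-1)}\sim 1+1/(4n)$. To close the gap, I would invoke the second-order Jensen correction, using the variance $\sum_j\alpha_j\bigl((2j+1)-(2n-1)/2\bigr)^2=(2n-1)/4$ together with the positivity of $m''_k$ to extract a curvature gain of order $n\,m''_{(2n-1)/2}/4$. An asymptotic computation shows that the leading terms cancel to leave a positive remainder of order $1/n^2$, so the main technical work would consist in turning these asymptotics into rigorous bounds on $m_k,\,m'_k,\,m''_k$ that are uniform in $n$, with the finitely many small $n$ checked directly. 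Tightness at $n=1$ and the fact that the inequality has considerable slack for $n\ge 2$ are consistent with this scheme.
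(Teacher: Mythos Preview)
Your proposal addresses only the right-hand inequality and leaves the (easy) left-hand one unmentioned; in the paper it follows from Slepian's lemma applied to the vectors $(\eta_1,\ldots,\eta_{2n})$ and $(\eta_1,-\eta_1,\ldots,\eta_n,-\eta_n)$. More importantly, the argument you sketch for the right-hand inequality has a genuine gap. Your first-order Jensen bound $\sum_j\alpha_j m_{2j+1}\ge m_{(2n-1)/2}$ is, as you correctly observe, too weak by a factor $1+1/(4n)$, and your proposed remedy via a ``second-order Jensen correction'' is not an inequality: for a convex function $m$ one has $\sum_j\alpha_j m(x_j)\ge m(\bar x)$, but there is no general lower bound of the form $m(\bar x)+\tfrac12 m''(\bar x)\,\mathrm{Var}$ without a \emph{uniform} lower bound on $m''$ over the full support of the $\alpha_j$ (which here extends from $1$ to $2n-1$, where $m''$ varies by orders of magnitude). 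Even at the heuristic level, your computation shows only that the curvature gain $\tfrac18(2n-1)\,m''_{(2n-1)/2}$ and the deficit $\sqrt{(2n-1)/(2n)}\,m_{n-1}-m_{(2n-1)/2}$ are of the \emph{same} order $\sqrt{\log n}/n^2$; showing that their difference has the correct sign would require the next asymptotic term, and turning that into a statement valid for every $n$ is not addressed.

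The paper avoids these difficulties entirely. The corollary is an immediate translation, via \eqref{1833_simplex}--\eqref{1833_cross}, of Theorem~\ref{1053}, and that theorem is proved by a Gaussian interpolation in the spirit of Chatterjee: one builds a one-parameter family $\xi(t)$, $t\in[0,1]$, of centered Gaussian vectors in $\R^{2n}$ interpolating between $\sqrt{2n/(2n-1)}\,(\eta_1,\ldots,\eta_{2n})$ and $(\eta_1,-\eta_1,\ldots,\eta_n,-\eta_n)$, replaces the maximum by the soft-max $F_\beta(x)=\beta^{-1}\log\sum_i e^{\beta x_i}$, differentiates $t\mapsto\E F_\beta(\xi(t))$, and reduces nonpositivity of the derivative to the single correlation inequality $\E[p_1(\xi(t))p_2(\xi(t))]\le\E[p_1(\xi(t))p_3(\xi(t))]$, which is then verified by a symmetry-and-density argument. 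This yields the inequality for every $n\ge 2$ in one stroke, with no asymptotics and no case checking.
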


We now provide a bound which is asymptotically sharper. Its proof will be given in Section~\ref{sec:proof2}.

\begin{theorem}\label{1053_asympt}
Let $\eta_1,\eta_2,\dots,$ be independent standard Gaussian variables. Then, as $n\to\infty$, one has
$$
  \E\,\max\{|\eta_1|,\dots,|\eta_n|\} =
    \left( 1 + \frac{1+o(1)}{8n \log n}\right) \, \E\,\max\{\eta_1,\dots,\eta_{2n}\}.
$$
\end{theorem}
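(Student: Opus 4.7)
The plan is to compute the difference
\begin{equation*}
\Delta_n := \E\max\{|\eta_1|,\ldots,|\eta_n|\} - \E\max\{\eta_1,\ldots,\eta_{2n}\}
\end{equation*}
to leading order and divide by $\E\max\{\eta_1,\ldots,\eta_{2n}\}\sim\sqrt{2\log(2n)}$. Since $\P[\max_i|\eta_i|\le x] = (2\Phi(x)-1)^n$ for $x\ge 0$ and $\P[\max_i\eta_i\le x] = \Phi(x)^{2n}$, integrating distribution functions yields
\begin{equation*}
\Delta_n = \int_0^\infty\bigl[\Phi(x)^{2n}-(2\Phi(x)-1)^n\bigr]\,dx + \int_{-\infty}^0\Phi(x)^{2n}\,dx,
\end{equation*}
the second term being super-exponentially small in $n$ and hence negligible. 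The first integrand is non-negative by the left-hand inequality of Theorem~\ref{1053}.

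Setting $p=p(x):=1-\Phi(x)$ and expanding $\log(1-p)$ and $\log(1-2p)$, a short computation gives
\begin{equation*}
(1-p)^{2n}-(1-2p)^n = (1-2p)^n\bigl(\eee^{np^2+O(np^3)}-1\bigr),
\end{equation*}
which equals $(1-2p)^n\,np^2(1+o(1))$ whenever $np^2,np^3\to 0$. Applying the Gumbel change of variables $x = u_{2n}+y/\sqrt{2\log(2n)}$, the extreme value facts from Section~\ref{subsec:extremes} yield $2np(x)\to\eee^{-y}$ and $(1-2p(x))^n\to\eee^{-\eee^{-y}}$, whence $np(x)^2 \sim \eee^{-2y}/(4n)$ and pointwise in $y$,
\begin{equation*}
4n\bigl[\Phi(x)^{2n}-(2\Phi(x)-1)^n\bigr] \longrightarrow \eee^{-\eee^{-y}}\eee^{-2y}.
\end{equation*}
Subject to dominated convergence, the substitution $t=\eee^{-y}$ evaluates $\int_{-\infty}^\infty \eee^{-\eee^{-y}}\eee^{-2y}\,dy = \int_0^\infty t\eee^{-t}\,dt = 1$, so with the Jacobian $dx = dy/\sqrt{2\log(2n)}$,
\begin{equation*}
\Delta_n = \frac{1+o(1)}{4n\sqrt{2\log(2n)}}.
\end{equation*}
Dividing by $\E\max\{\eta_1,\ldots,\eta_{2n}\}\sim\sqrt{2\log(2n)}$ gives the ratio $(1+o(1))/(8n\log(2n)) = (1+o(1))/(8n\log n)$, as required.

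The main obstacle is rigorously justifying the interchange of limit and integral, i.e.\ producing an integrable (in $y$) majorant for $4n[\Phi(x)^{2n}-(2\Phi(x)-1)^n]$ that is uniform in $n$. In the central window $|y|\le C$ this is immediate from the expansion and the Gumbel asymptotics. For $y\gg 1$ one combines the Mills ratio bound $p(x)\le \phi(x)/x$ with the explicit form of $u_{2n}$ in~\eqref{eq:def_u_n} to get $4n^2 p(x)^2 \lesssim \eee^{-2y}$, which together with the trivial $(1-2p)^n\le 1$ provides the integrable majorant $C\eee^{-2y}$. For $y\ll -1$ one uses $(1-p)^{2n}-(1-2p)^n\le (1-p)^{2n}\le \exp(-2np)$ together with the fact that $2np$ grows at least like $\eee^{-y}$ in this regime, giving double-exponential decay that comfortably dominates the factor $4n$. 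These bounds legitimize the Lebesgue passage to the limit and close the argument.
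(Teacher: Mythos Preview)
Your strategy is exactly the paper's: express $\Delta_n$ as $\int_0^\infty[(1-p)^{2n}-(1-2p)^n]\,dx$ plus a super-exponentially small piece, make the Gumbel change of variables, compute the pointwise limit $\eee^{-\eee^{-y}}\eee^{-2y}$, and integrate. The pointwise computation and the majorant for $y\gg 1$ are fine.

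The gap is in your treatment of $y\ll -1$. Bounding $(1-p)^{2n}-(1-2p)^n$ by $(1-p)^{2n}\le\exp(-2np)$ throws away the crucial factor $np^2$, and the resulting bound $4n\,\eee^{-2np}$ is \emph{not} an $n$-independent function of $y$: at any fixed $y<0$ one has $2np\to\eee^{-y}$, so $4n\,\eee^{-2np}\sim 4n\,\eee^{-\eee^{-y}}\to\infty$. Moreover, the assertion that $2np$ grows at least like $\eee^{-y}$ fails for very negative $y$: a Mills-ratio calculation gives $2np\approx \eee^{-y}\exp\bigl(-y^2/(4\log(2n))\bigr)$, and the correction factor tends to $0$ once $|y|\gg\sqrt{\log n}$ (indeed $2np\le n$ always, while $\eee^{-y}$ reaches order $n^2$ at the left endpoint of the $y$-range).

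The paper repairs this in two moves. First, it keeps the sharper inequality
\[
(1-p)^{2n}-(1-2p)^n\le n p^{2}(1-p)^{2n-2},
\]
so that $4n$ times the integrand is at most $4n^{2}p^{2}\,(1-p)^{2n-2}$; the factor $4n^{2}p^{2}$ is what absorbs the $n$-dependence, since $n^{2}p^{2}\le C\eee^{-2y}$ via the upper Mills bound. Second, it only claims the lower bound $2np\ge C'\eee^{-7y/8}$ on the restricted range $y\ge -\tfrac14 t_n^2$ (where the quadratic correction $y^2/(2t_n^2)$ is controlled by $|y|/8$), yielding the integrable, $n$-free majorant $C\eee^{-2y}\exp(-C''\eee^{-7y/8})$. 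The residual sliver $y\in[-t_n^2,-\tfrac14 t_n^2]$ is handled not by dominated convergence but by a direct estimate showing its contribution is $o(1)$. Your argument becomes complete once you make these two adjustments.
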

Combining Theorem~\ref{1053_asympt} with~\eqref{1833_simplex} and~\eqref{1833_cross} yields the following
\begin{corollary}
As $n\to\infty$,
$$
V_1(C_n) = V_1(T_{2n-1}) \left(1-\frac{1+o(1)}{4n}\right),
\quad
V_1(C_n) = V_1(S_{2n-1}) \left(1+\frac{1+o(1)}{8n\log n}\right).
$$
\end{corollary}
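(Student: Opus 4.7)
The plan is to compute $E_n := \E\max\{|\eta_1|,\ldots,|\eta_n|\} - \E\max\{\eta_1,\ldots,\eta_{2n}\}$ directly from the CDFs and extract its asymptotic order via the Gumbel rescaling. Since $\P(\max_{i\leq n}|\eta_i|\leq x) = (2\Phi(x)-1)^n$ on $x\geq 0$ and $\P(\max_{i\leq 2n}\eta_i\leq x) = \Phi(x)^{2n}$, the tail identity $\E X = \int_0^\infty(1-F_X(x))\,dx - \int_{-\infty}^0 F_X(x)\,dx$ gives
\begin{equation*}
E_n = \int_0^\infty \bigl[\Phi(x)^{2n} - (2\Phi(x)-1)^n\bigr]\,dx + \int_{-\infty}^0 \Phi(x)^{2n}\,dx.
\end{equation*}
The Chernoff bound $1-\Phi(y)\leq \tfrac12 e^{-y^2/2}$ makes the second integral $O(4^{-n}/\sqrt{n})$, utterly negligible next to the target rate $\asymp 1/(n\sqrt{\log n})$.

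For the main integral I would write $p := 1-\Phi(x)$ and use the telescoping identity $(1-p)^{2n}-(1-2p)^n = n\int_0^{p^2}(1-2p+s)^{n-1}\,ds$, which sandwiches the integrand between $np^2(1-2p)^{n-1}$ and $np^2(1-p)^{2n-2}$; both agree with the integrand up to a $(1+o(1))$ factor whenever $p\to 0$ and $np^2\to 0$. Next I would apply the Gumbel substitution $x = b_n + t/b_n$ with $b_n := u_{2n}$ from~\eqref{eq:def_u_n}, for which $2n\phi(b_n)/b_n\sim 1$. The two-term Mills' ratio yields, for every fixed $t\in\R$, the pointwise limits $2np\to e^{-t}$, $(1-p)^{2n-2}\to e^{-e^{-t}}$ and $np^2 = (2np)^2/(4n)\sim e^{-2t}/(4n)$; together with the Jacobian $dx = dt/b_n$, the rescaled integrand converges to $\tfrac{1}{4nb_n}\,e^{-e^{-t}}e^{-2t}$. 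The substitution $u=e^{-t}$ evaluates $\int_{\R}e^{-e^{-t}}e^{-2t}\,dt = \int_0^\infty u e^{-u}\,du = \Gamma(2)=1$, and modulo dominated convergence this would give $E_n = (1+o(1))/(4nb_n)$. Dividing by $\E\max\{\eta_1,\ldots,\eta_{2n}\} = b_n(1+o(1))$ from~\eqref{eq:asympt_simplex} and using $b_n^2\sim 2\log(2n)\sim 2\log n$ yields the ratio $1 + (1+o(1))/(4nb_n^2) = 1 + (1+o(1))/(8n\log n)$.

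The main obstacle is justifying dominated convergence for the rescaled integrand uniformly in $n$. For $t\geq 0$, Mills' ratio together with $2n\phi(b_n)/b_n\sim 1$ gives $p\leq \phi(b_n)e^{-t}/b_n$, hence $n^2 p^2(1-p)^{2n-2}\leq Ce^{-2t}$, an integrable majorant on $[0,\infty)$. For $-b_n^2/2\leq t<0$, the same Mills bound still controls $np^2$ polynomially in $e^{-t}$, while the factor $(1-p)^{2n-2}\leq \exp(-2(n-1)p)$ with $p\gtrsim e^{-t}/n$ produces double-exponential decay in $|t|$ that absorbs the polynomial growth. For $t<-b_n^2/2$ (where $x$ is near $0$), the crude bound $p\leq 1/2$ gives $(1-p)^{2n-2}\leq 4^{-(n-1)}$, contributing only a super-exponentially small error. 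Patching these three regimes supplies the required $n$-independent integrable majorant, completing the argument.
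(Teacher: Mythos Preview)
Your approach is essentially the same as the paper's: the corollary is deduced from Theorem~\ref{1053_asympt}, whose proof (Section~\ref{sec:proof2}) follows exactly your outline---split off the negligible integral over $(-\infty,0)$, perform the Gumbel change of variables, identify the pointwise limit $e^{-e^{-t}}e^{-2t}$, and justify dominated convergence by breaking the negative $t$-axis into a middle regime and a far tail. The paper even uses the same upper bound $n\bar\Phi^2(1-\bar\Phi)^{2n-2}$ that your telescoping identity produces.

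One small correction: in your middle regime $-b_n^2/2\le t<0$ the claimed lower bound $p\gtrsim e^{-t}/n$ fails near the left endpoint (at $t=-b_n^2/2$ one has $p/(e^{-t}/n)\asymp e^{-b_n^2/8}\to 0$), because the quadratic term $e^{-t^2/(2b_n^2)}$ in Mills' expansion is not negligible there. The paper handles this by restricting to $a\ge -t_n^2/4$ and proving instead $p\geq (C/n)e^{-7t/8}$ (see~\eqref{eq:bar_Phi_lower}); any exponent strictly below $1$ suffices for the double-exponential factor to dominate. Finally, to obtain the first displayed identity of the corollary you should also invoke $V_1(T_{2n-1})=\sqrt{2n/(2n-1)}\,V_1(S_{2n-1})$ and expand $\sqrt{(2n-1)/(2n)}=1-\tfrac{1}{4n}+O(n^{-2})$, which swamps the $1/(n\log n)$ correction.
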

It is possible to obtain further asymptotic terms in~\eqref{eq:asympt_simplex} and~\eqref{eq:asympt_cross}, (see, e.g.,\ \cite[Eq.~(2.4.8) on p.~39]{leadbetter_etal_book}) but it seems that none of these expansions can correctly capture the very small difference between the expectations in Theorems~\ref{1053} and~\ref{1053_asympt}.

\section{Higher moments and limiting distribution of the random width}
\subsection{Sudakov's formula for higher moments}
Given a convex compact set $K\subset \R^n$ we denote by $W_K$ the length of the projection of $K$ onto a uniformly chosen direction, that is
\begin{equation}\label{eq:W_def_repeat}
W_K = \sup_{t\in K} \langle U,t \rangle - \inf_{t\in K} \langle U,t \rangle,
\end{equation}
where $U$ has uniform distribution on the sphere $\S^{n-1}$.  In this section we study the higher moments of the random variable $W_K$.

Recall that $\eta=(\eta_1,\ldots,\eta_n)$ denotes a random vector having standard normal distribution on $\R^n$. The \textit{isonormal Gaussian process} is defined by
$$
\Xi(t)=\langle \eta, t\rangle,\quad  t\in \R^n.
$$
It is characterized by
\begin{equation}\label{eq:isonormal_cov}
\E\, [\Xi(t)] = 0, \quad \E\, [\Xi(t) \Xi(s)] = \langle t, s\rangle, \quad t,s\in\R^n.
\end{equation}
For a compact set $K\subset \R^n$ define the range of $\Xi$ over $K$ to be
$$
\range_{t\in K} \Xi(t)
=
\sup_{t\in K} \Xi(t) - \inf_{t\in K} \Xi(t).
$$
The next theorem generalizes Sudakov's formula~\eqref{1802} to higher moments.

\begin{theorem}\label{theo:sudakov_general}
If the set $K\subset \R^n$ is convex and compact, then
\begin{equation}\label{eq:E_W_k_repeat}
\E [W^k_K] =
2^{-\frac k2} \frac{\Gamma\left(\frac n2\right)}{\Gamma\left(\frac{n+k}{2}\right)}
\E \left[ \left( \range_{t\in K} \Xi(t) \right)^k\right].
\end{equation}
If, moreover, the set $K$ is symmetric with respect to the origin,  then
\begin{equation}\label{eq:E_W_k}
\E [W_K^k]
=
2^{\frac k2} \frac{\Gamma\left(\frac n2\right)}{\Gamma\left(\frac{n+k}{2}\right)} \,
\E \left[\left(\sup_{t\in K}\Xi(t)\right)^k\right].
\end{equation}
\end{theorem}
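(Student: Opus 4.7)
The plan is to exploit the polar decomposition of the standard Gaussian vector $\eta$ on $\R^n$: write $\eta = R\, U$, where $R=|\eta|$ follows a chi distribution with $n$ degrees of freedom and $U=\eta/|\eta|$ is uniformly distributed on $\S^{n-1}$; crucially $R$ and $U$ are independent. Substituting into the definition of the isonormal process gives $\Xi(t) = \langle \eta, t \rangle = R\,\langle U, t\rangle$, and since $R>0$ almost surely, one can pull $R$ out of the supremum and infimum over $t\in K$. This yields the pointwise identity
\begin{equation*}
\range_{t\in K} \Xi(t) \;=\; R \cdot \Bigl(\sup_{t\in K}\langle U,t\rangle - \inf_{t\in K}\langle U,t\rangle\Bigr) \;=\; R \cdot W_K,
\end{equation*}
with $W_K$ as in \eqref{eq:W_def_repeat}.

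Taking $k$-th powers and using independence of $R$ and $U$ (hence of $R$ and $W_K$), one obtains
\begin{equation*}
\E\Bigl[\bigl(\range_{t\in K} \Xi(t)\bigr)^k\Bigr] \;=\; \E[R^k]\cdot \E[W_K^k].
\end{equation*}
The moment $\E[R^k]$ is classical for the chi distribution: $\E[R^k] = 2^{k/2}\,\Gamma((n+k)/2)/\Gamma(n/2)$, which follows by direct integration against the density $\frac{1}{2^{n/2-1}\Gamma(n/2)} r^{n-1} e^{-r^2/2}$ on $(0,\infty)$. Solving for $\E[W_K^k]$ gives \eqref{eq:E_W_k_repeat}.

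For the second identity, assume $K=-K$. Then $\inf_{t\in K}\langle U,t\rangle = -\sup_{t\in K}\langle U,t\rangle$ and likewise $\inf_{t\in K}\Xi(t) = -\sup_{t\in K}\Xi(t)$, so both $W_K$ and $\range_{t\in K}\Xi(t)$ are exactly twice the corresponding suprema. Substituting $W_K = 2\sup_{t\in K}\langle U,t\rangle$ and $\range_{t\in K}\Xi(t) = 2\sup_{t\in K}\Xi(t)$ into \eqref{eq:E_W_k_repeat} produces the factor $2^k\cdot 2^{-k/2}=2^{k/2}$, which is exactly \eqref{eq:E_W_k}.

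There is essentially no obstacle: the only points requiring mild care are the compactness of $K$ (to ensure the supremum and infimum are attained and finite, so that taking $k$-th moments and applying Fubini is unproblematic) and the fact that $R>0$ almost surely (so the polar factorization is well-defined on a set of full measure). The proof is therefore a short application of the polar decomposition combined with the formula for chi moments.
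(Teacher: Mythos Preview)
Your proof is correct and follows essentially the same route as the paper: polar decomposition $\eta = RU$ with $R$ chi-distributed and independent of $U$, the identity $\range_{t\in K}\Xi(t) = R\,W_K$, then factoring the $k$-th moment and inserting $\E[R^k] = 2^{k/2}\Gamma((n+k)/2)/\Gamma(n/2)$. The symmetric case is handled identically in both.
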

Tsirelson~\cite{tsirelson2} generalized Sudakov's formula~\eqref{1802} to all intrinsic volumes. After the acceptance of this paper we have learned that Paouris and Pivovarov extended Tsirelson's formula to higher moments (see~\cite[Prop.~4.1]{PP13}) thereby proving a more general variant of Theorem~\ref{theo:sudakov_general}.
\begin{proof}
The standard Gaussian vector $\eta$ in $\R^n$ can be written as
$$
\eta \eqdistr RU,
$$
where $U$ and $R^2$ are such that
\begin{enumerate}
\item $U$ is a random vector with uniform distribution on the unit sphere in $\R^n$;
\item $R^2$ is a random variable having $\chi^2$-distribution with $n$ degrees of freedom;
\item $U$ and $R^2$ are independent.
\end{enumerate}
It follows that we have the distributional equality
\begin{equation}\label{eq:uniform_vs_gaussian_width}
\range_{t\in K} \Xi(t)
=
\sup_{t\in K} \langle \eta ,t \rangle - \inf_{t\in K} \langle \eta, t\rangle
\eqdistr
\sup_{t\in K} \langle RU ,t \rangle - \inf_{t\in K} \langle RU, t\rangle
=
RW_K.
\end{equation}
Taking $k$-th moments of both parts and noting that $R$ and $W_K$ are independent, we obtain that
$$
\E\left[\left(\range_{t\in K} \Xi(t)\right)^k\right] = \E [R^k]\, \E[W_K^k].
$$
The moments $\E [R^k]$ of the $\chi^2$-distribution are known. Inserting the value of the moment, we obtain~\eqref{eq:E_W_k_repeat} (which holds without the symmetry assumption on $K$). If the set $K$ is symmetric with respect to the origin, then $\range_{t\in K} \Xi(t) = 2\sup_{t\in K} \Xi(t)$ and we obtain~\eqref{eq:E_W_k}.
\end{proof}
\begin{remark}
In particular, taking $k=1$ in Theorem~\ref{theo:sudakov_general} and noting that the first intrinsic volume is related to the mean width $\E\, [W_K]$ by~\eqref{eq:mean_width_V1},
we recover from~\eqref{eq:E_W_k_repeat} Sudakov's~\cite{vS76} formula
\begin{equation}\label{eq:sudakov_repeat}
V_1(K) =  \sqrt{\frac{\pi}2}\, \E \left[\range_{t\in K} \Xi(t)\right] =  \sqrt {2\pi}\, \E \left[\sup_{t\in K} \Xi(t)\right].
\end{equation}
Note that the symmetry assumption on $K$ is not needed in the derivation of~\eqref{eq:sudakov_repeat} because in the last equality we used only that $\sup_{t\in K}\Xi(t)$ has the same distribution as $-\inf_{t\in K}\Xi(t)$.
\end{remark}

\subsection{Applications to regular polytopes}
Theorem~\ref{theo:sudakov_general} can be used to prove several conjectures on projections of regular polytopes which are due to Finch~\cite{finch_cross,finch_simplex,finch_width_distr}.
\begin{example}
Let $Q_n=[-\frac 12, +\frac 12]^n$ be the $n$-dimensional cube of unit volume. It is easy to see that $\range_{t\in Q_n} \Xi(t) = \sum_{i=1}^n |\eta_i|$. Therefore, by~\eqref{eq:E_W_k_repeat},
\begin{equation}\label{eq:moment_W_cube}
\E [W_{Q_n}^k]  = 2^{-\frac k2} \frac{\Gamma\left(\frac n2\right)}{\Gamma\left(\frac{n+k}{2}\right)} \E \left[\left(\sum_{i=1}^n |\eta_i|\right)^k\right].
\end{equation}
In particular, taking $k=1$ and noting that $\E |\eta_1|=\sqrt{\frac 2\pi}$ we obtain that the mean width is
$$
\E [W_{Q_n}] = \frac{\Gamma \left(\frac n2\right)}{\sqrt 2\, \Gamma\left(\frac{n+1}{2}\right)}\, n \, \E|\eta_1| =   \frac{n\, \Gamma \left(\frac n2\right)}{\sqrt {\pi} \, \Gamma\left(\frac{n+1}{2}\right)},
$$
or, equivalently, $V_1(Q_n) = n$, which is well known. The second moment of the projection length is given by
$$
\E [W^2_{Q_n}]
=
\frac{1}{n} \E \left[\left(|\eta_1|+\ldots+|\eta_n|\right)^2\right] = \E|\eta_1^2|  + (n-1) \E |\eta_1\eta_2|
=
1  + \frac{2}{\pi}(n-1),
$$
where we have used that $\E |\eta_1^2|=1$ and $\E |\eta_1\eta_2|=(\E |\eta_1|)^2 = \frac{2}{\pi}$.
This formula has been conjectured by Finch~\cite[p.~9]{finch_simplex} who established it for $n=2,3$ by purely geometric arguments~\cite{finch_width_distr}. Using~\eqref{eq:moment_W_cube} it is possible to compute more moments of $W_{Q_n}$, for example
\begin{align*}
\E [W_{Q_n}^3] &=  \frac{\Gamma\left(\frac n2\right)}{2\Gamma\left(\frac{n+3}{2}\right)} \pi^{-\frac 32} n \left(2n^2 + (3\pi-6)n + 4-\pi\right),\\
\E [W_{Q_n}^4] &= \frac{1}{(n+2)\pi^2} \left(4n^3 + (12\pi-24)n^2 + (44 - 20\pi + 3\pi^2) n +8\pi-24\right),
\end{align*}
where we have used that $\E |\eta_1| = \sqrt{\frac 2\pi}$,  $\E |\eta_1^2|=1$, $\E |\eta_1^3| = 2\sqrt{\frac 2\pi}$, $\E |\eta_1^4| = 3$.
\end{example}

\begin{example}
For the regular crosspolytope $C_n=\conv(\pm e_1,\ldots,\pm e_n)$ we have $\sup_{t\in C_n} \Xi(t) = \max\{|\eta_1|,\ldots,|\eta_n|\}$ and therefore Theorem~\ref{theo:sudakov_general} yields
$$
\E [W^k_{C_n}] = 2^{\frac k2} \frac{\Gamma\left(\frac n2\right)}{\Gamma\left(\frac{n+k}{2}\right)}
\E \left[ \left(\max_{1\leq i\leq n}  |\eta_i|\right)^k\right],
\quad k\in\N.
$$
For $k=2$, this formula was conjectured by Finch in~\cite[p.~3]{finch_cross}; see also~\cite{finch_simplex}.
\end{example}

\begin{example}
For the regular $(n-1)$-dimensional simplex $S_{n-1}=\conv(e_1,\ldots,e_n)\subset \R^n$, Theorem~\ref{theo:sudakov_general} yields
$$
\E [W^k_{S_{n-1}}] = 2^{-\frac k2} \frac{\Gamma\left(\frac n2\right)}{\Gamma\left(\frac{n+k}{2}\right)}
\E\left[\left(\max_{1\leq i\leq n} \eta_i -\min_{1\leq i\leq n} \eta_i\right)^k\right].
$$
Note that in this formula, $S_{n-1}$ is projected onto a random direction in $\R^n$, even though $S_{n-1}$ is $(n-1)$-dimensional.

It is more natural to state the corresponding formula for $T_{n-1}$ (which is a regular simplex with $n$ vertices inscribed in $\S^{n-2}\subset \R^{n-1}$) projected onto a random direction in $\R^{n-1}$. As a realization of $T_{n-1}$ we take the points
$$
v_i := \sqrt\frac{n}{n-1}\left(e_i -\frac{e_1+\ldots+e_n}{n}\right), \quad i=1,\ldots,n,
$$
in the hyperplane $L:= \{x_1+\ldots+x_n=0\}\subset \R^n$ (which can be identified with $\R^{n-1}$). By~\eqref{eq:isonormal_cov}, the isonormal process $\Xi$ on $L$ satisfies
$$
(\Xi(v_i))_{i=1,\ldots,n} \eqdistr  \sqrt\frac {n}{n-1} \left(\eta_i -\frac{\eta_1+\ldots+\eta_n}{n}\right)_{i=1,\ldots,n},
$$
so that for its range on $T_{n-1}$ we have
$$
\range_{t\in T_{n-1}} \Xi(t) \eqdistr \sqrt\frac {n}{n-1} \left(\max_{1\leq i\leq n} \eta_i -\min_{1\leq i\leq n} \eta_i\right).
$$
Therefore, for the projection length of  $T_{n-1}$ onto a uniformly chosen random direction in the hyperplane $L$ we obtain
$$
\E [W^k_{T_{n-1}}] =
2^{-\frac k2} \frac{\Gamma\left(\frac {n-1}2\right)}{\Gamma\left(\frac{n-1+k}{2}\right)} \left(\frac {n}{n-1}\right)^{k/2}
\E \left[\left(\max_{1\leq i\leq n} \eta_i -\min_{1\leq i\leq n} \eta_i\right)^k\right].
$$
For $k=2$, this formula was conjectured by Finch~\cite[p.~4]{finch_simplex} who verified it for small values of $n$.
\end{example}


\subsection{Limit distribution for the random width}
What is the asymptotic distribution of the projection length of a high-dimensional regular polytope onto a random line? The next two theorems  answer this question. The proofs are postponed to Section~\ref{sec:proofs3}.
\begin{theorem}\label{theo:shadow_cube_limit_distr}
The random width of the cube $Q_n =[-\frac 12, \frac 12]^n$ satisfies the following central limit theorem:
$$
W_{Q_n} -  \sqrt{\frac{2n}{\pi}} \todistr {\mathcal{N}} \left(0, \frac{\pi-3}{\pi}\right).
$$
\end{theorem}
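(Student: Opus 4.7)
The starting point is the distributional representation implicit in \eqref{eq:uniform_vs_gaussian_width}. Writing $\eta = RU$ with $R=\|\eta\|$ uniform-independent of $U$, and noting that for $K=Q_n=[-\tfrac12,\tfrac12]^n$ one has $\sup_{t\in Q_n}\langle \eta,t\rangle - \inf_{t\in Q_n}\langle \eta,t\rangle = \sum_{i=1}^n|\eta_i|$, while $R W_{Q_n}(U)$ has the same distribution, I obtain
\begin{equation*}
W_{Q_n}\eqdistr \frac{S_n}{\sqrt{T_n}}, \qquad S_n := \sum_{i=1}^n|\eta_i|, \quad T_n := \sum_{i=1}^n \eta_i^2.
\end{equation*}
Thus it suffices to analyze the right-hand side, which is a smooth function of two sums of i.i.d.\ random variables.

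The next step is the bivariate central limit theorem for the i.i.d.\ vectors $(|\eta_i|,\eta_i^2)$. The relevant moments are $\E|\eta_1|=\sqrt{2/\pi}$, $\E\eta_1^2=1$, $\Var(|\eta_1|)=1-2/\pi$, $\Var(\eta_1^2)=2$, and $\Cov(|\eta_1|,\eta_1^2)=\E|\eta_1|^3-\E|\eta_1|=2\sqrt{2/\pi}-\sqrt{2/\pi}=\sqrt{2/\pi}$. Hence
\begin{equation*}
\frac{1}{\sqrt{n}}\Bigl(S_n - n\sqrt{2/\pi},\; T_n - n\Bigr)\todistr \mathcal{N}(0,\Sigma), \qquad \Sigma=\begin{pmatrix} 1-2/\pi & \sqrt{2/\pi}\\ \sqrt{2/\pi} & 2\end{pmatrix}.
\end{equation*}

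Finally, apply the delta method to $f(s,t)=s/\sqrt{t}$ at the point $(n\sqrt{2/\pi},n)$. Since $f(n\sqrt{2/\pi},n)=\sqrt{2n/\pi}$ and the gradient evaluated at this point equals $(1/\sqrt{n},-\sqrt{2/\pi}/(2\sqrt{n}))$, one finds that $W_{Q_n}-\sqrt{2n/\pi}$ is asymptotically centered Gaussian with variance
\begin{equation*}
(1-2/\pi) + \tfrac{1}{4}\cdot(2/\pi)\cdot 2 - 2\cdot \tfrac{1}{2}\sqrt{2/\pi}\cdot \sqrt{2/\pi} = 1-\tfrac{2}{\pi}+\tfrac{1}{\pi}-\tfrac{2}{\pi}=\tfrac{\pi-3}{\pi},
\end{equation*}
which is exactly the claimed limit law.

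There is no real obstacle: the only nontrivial step is spotting the representation of $W_{Q_n}$ as $S_n/\sqrt{T_n}$, after which the multivariate CLT and delta method deliver the result with a short covariance computation. If one wants to be scrupulous, one can invoke continuity of $f$ on $\{t>0\}$ together with $T_n/n\to 1$ a.s.\ to justify the delta-method step directly rather than via the formal Taylor-expansion argument.
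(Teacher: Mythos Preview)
Your proof is correct and follows essentially the same approach as the paper: the representation $W_{Q_n}\eqdistr S_n/\sqrt{T_n}$, the bivariate CLT for $(|\eta_i|,\eta_i^2)$ with the same covariance matrix, and then a linearization (you phrase it as the delta method, the paper as an explicit Taylor expansion of $\mu\sqrt n\,(1+\sigma X_n/(\mu\sqrt n))/\sqrt{1+vY_n/\sqrt n}$) leading to the same variance computation $(\pi-3)/\pi$. The only cosmetic difference is that the paper standardizes $X_n,Y_n$ first, whereas you carry the raw sums and absorb the $\sqrt n$ into the gradient at $(n\sqrt{2/\pi},n)$; the arithmetic is identical.
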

After the acceptance of this paper we became aware of the reference~\cite{paourisetal} where the central limit theorem was established for the volume of the projection of the cube onto a random linear subspace of any fixed dimension.
\begin{theorem}\label{theo:shadow_simpl_cross_limit_distr}
For the random width of the simplex $S_{n-1} = \conv(e_1,\ldots,e_n)$ and the crosspolytope $C_n=\conv(\pm e_1,\ldots,\pm e_n)$ we have
\begin{align}
\sqrt{2n\log n} \left(W_{S_{n-1}} - \frac{2 u_{n}}{\sqrt n} \right)  &\todistr G_1+G_2\label{eq:limit_width_simpl},
\\
\sqrt{2n\log n} \left(W_{C_{n}} - \frac{2 u_{2n}}{\sqrt n} \right)  &\todistr 2G_1, \label{eq:limit_width_cross}
\end{align}
where $G_1,G_2$ are independent random variables with the Gumbel double exponential distribution $\P[G_1\leq x]= \P[G_2\leq x] = \eee^{-\eee^{-x}}$, $x\in\R$.
\end{theorem}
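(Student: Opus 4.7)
The plan is to reduce both statements to classical extreme value asymptotics via the almost-sure representation used in the proof of Theorem~\ref{theo:sudakov_general}. Writing $\eta = R_n U$ with $R_n = \|\eta\|$ and $U = \eta/R_n$ uniform on $\S^{n-1}$ and independent of $R_n$, one obtains almost surely (and in particular in distribution)
$$
W_{S_{n-1}} \eqdistr \frac{M_n}{R_n}, \qquad W_{C_n} \eqdistr \frac{2 N_n}{R_n}, \qquad M_n := \max_i \eta_i - \min_i \eta_i, \qquad N_n := \max_i |\eta_i|.
$$
The task is therefore to describe the joint limiting behaviour of $(M_n, R_n)$ and $(N_n, R_n)$.

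Two classical ingredients do the work. First, from extreme value theory (\cite[Theorem~1.8.3]{leadbetter_etal_book}), the vector $(\sqrt{2\log n}(\max_i \eta_i - u_n),\, \sqrt{2\log n}(-\min_i \eta_i - u_n))$ converges in distribution to $(G_1,G_2)$ with independent Gumbels, which in particular gives $\sqrt{2\log n}(M_n - 2u_n) \todistr G_1 + G_2$; analogously~\eqref{eq:max_gumbel2} yields $\sqrt{2\log n}(N_n - u_{2n}) \todistr G_1$. Second, the central limit theorem applied to $R_n^2 \sim \chi^2_n$ gives $R_n = \sqrt n + O_P(1)$, and hence $R_n/\sqrt n \toprobab 1$. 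The proof then follows from the algebraic identity
$$
\sqrt{2n\log n}\left(\frac{M_n}{R_n} - \frac{2u_n}{\sqrt n}\right) = \frac{\sqrt n}{R_n}\cdot \sqrt{2\log n}(M_n - 2u_n) + \frac{2u_n\sqrt{2\log n}}{R_n}(\sqrt n - R_n).
$$
Slutsky's theorem applied to the first summand yields the limit $G_1+G_2$, since $\sqrt n/R_n \toprobab 1$. The second summand is $O_P(\log n/\sqrt n) = o_P(1)$, because $u_n \sqrt{2\log n}\sim 2\log n$, $\sqrt n - R_n = O_P(1)$, and $R_n \sim \sqrt n$. This proves~\eqref{eq:limit_width_simpl}. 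The crosspolytope case~\eqref{eq:limit_width_cross} is identical upon replacing $M_n - 2u_n$ by $2(N_n - u_{2n})$ and $u_n$ by $u_{2n}$, giving the limit $2G_1$.

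The main subtlety is that $R_n$, $\max_i \eta_i$, and $\min_i \eta_i$ are \emph{not} independent on this common probability space. This is resolved precisely because $R_n/\sqrt n$ concentrates at scale $1/\sqrt n$, which is much faster than the scale $1/\sqrt{\log n}$ at which the extremes fluctuate; consequently $R_n/\sqrt n$ contributes to the limit only as the deterministic constant $1$, and Slutsky absorbs the dependence automatically. The asymptotic independence of max and min needed to identify $G_1$ and $G_2$ as independent Gumbels is the content of~\cite[Theorem~1.8.3]{leadbetter_etal_book} and is the only nontrivial probabilistic input to the argument.
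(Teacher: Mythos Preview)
Your proof is correct and follows essentially the same strategy as the paper: both start from the representation $W\eqdistr(\text{range or max})/\|\eta\|$, invoke the Gumbel limit from~\cite[Theorem~1.8.3]{leadbetter_etal_book} for the numerator, and use the CLT concentration of $\|\eta\|$ (equivalently of $\sum\eta_i^2$) to show the denominator contributes only the constant $\sqrt n$. The only cosmetic difference is that the paper Taylor-expands the ratio $\frac{2u_n+Z_n/u_n}{\sqrt{n+\sqrt{2n}\,Y_n}}$ while you write an exact algebraic identity and apply Slutsky; both isolate the same negligible term of order $O_P(\log n/\sqrt n)$.
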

\begin{remark}
It is easy to check that the density of $G_1+G_2$ equals $2\eee^{-x} K_0(2\eee^{-x/2})$, $x\in\R$, where
$$
K_0(z)=\int_0^{\infty} \eee^{-z\cosh t} \dd t, \quad z>0,
$$
is the modified Bessel function of the second kind.
\end{remark}

\section{Proof of Theorem~\ref{1053}}\label{sec:proof1}
As already mentioned, the first estimate in Theorem~\ref{1053} is a consequence of the Slepian lemma. Therefore, we concentrate on proving the inequality
$$
\E\,\max\{|\eta_1|,\dots,|\eta_n|\}\leq
\sqrt{\frac{2n}{2n-1}}\, \E\,\max\{\eta_1,\dots,\eta_{2n}\}.
$$

For $n=1$ the inequality follows by direct calculations, thus we assume that $n\geq 2$.

The idea of the proof of goes back to the work of Chatterjee (see~\cite{sC05} or~\cite[p.~50]{AT09}). For $t\in[0,1]$ consider a centered Gaussian vector
$$
     \xi(t)=(\xi_1(t),\dots,\xi_{2n}(t))
$$
with correlations defined by
\begin{align*}
\E\,[\xi_i^2(t)]&=\frac{2n}{t+2n-1}, \quad i=1,\dots,2n,\\
\E\,[\xi_{2i-1}(t)\xi_{2i}(t)] &= -\frac{2nt}{t+2n-1}, \quad i=1,\dots,n,
\end{align*}
and $\E\,[\xi_i(t)\xi_j(t)]=0$ otherwise. We have
$$
\xi(0)\overset{d}{=}\sqrt{\frac{2n}{2n-1}}\, (\eta_1,\dots,\eta_{2n}), \quad \xi(1)\overset{d}{=}(\eta_1, -\eta_1, \eta_2, -\eta_2, \dots,\eta_n, -\eta_n).
$$
Hence it is sufficient to show that the function
$$
 \varphi(t):=\E\,\max\{\xi_1(t),\dots,\xi_{2n}(t)\}
$$
is non-increasing on $[0,1]$. Consider the function
$$
F_\beta(x_1,\dots,x_{2n}):=\frac1\beta\log\left(\sum_{i=1}^{2n}\eee^{\beta x_i}\right).
$$
It is immediate that
$$
\max\{x_1,\dots,x_{2n}\}\le F_\beta(x_1,\dots,x_{2n})\le \frac{\log 2n}{\beta}+\max\{x_1,\dots,x_{2n}\}.
$$
Therefore we only need to show that for any $\beta>0$ the function
$$
\varphi_\beta(t):=\E\,F_\beta(\xi(t))
$$
is non-increasing on $[0,1]$.

In what follows, $\bx$ stands for $(x_1,\dots,x_{2n})$. Set $\sigma_{ij}(t):=\E\,[\xi_i(t)\xi_j(t)]$ and let us denote by $f(t,\bx)$ the probability density function of $\xi(t)$. It is a well-known property of $f$ that
$$
\frac{\partial f}{\partial \sigma_{ii}}=\frac12\frac{\partial^2f}{\partial x^2_i},
\quad\quad
\frac{\partial f}{\partial \sigma_{ij}}=\frac{\partial^2f}{\partial x_i\partial x_j}, \quad i\ne j.
$$
Therefore,
$$
\frac{\partial\varphi_\beta}{\partial t}=\int_{\R^{2n}}F_\beta(\bx)\frac{\partial f}{\partial t}(\bx)\,\dd\bx=\int_{\R^{2n}}F_\beta(\bx)\sum_{1\le i\le j\le 2n}\frac{\partial f}{\partial \sigma_{ij}}(\bx)\frac{\partial \sigma_{ij}}{\partial t}\,\dd\bx.
$$
We have
\begin{align*}
\frac{\partial \sigma_{ii}}{\partial t}&=-\frac{2n}{(t+2n-1)^2}, \quad i=1,\dots,2n,\\
\frac{\partial \sigma_{2i-1,2i}}{\partial t}&=-\frac{2n(2n-1)}{(t+2n-1)^2}, \quad i=1,\dots,n,
\end{align*}
and $\partial\sigma_{ij}/\partial t=0$ otherwise. Thus we obtain
\begin{align*}
\frac{\partial\varphi_\beta}{\partial t}&=-\frac{n}{(t+2n-1)^2}\int_{\R^{2n}}F_\beta(\bx)\sum_{i=1}^n\left[\frac{\partial^2 f}{\partial x^2_{2i-1}}(\bx)+2(2n-1)\frac{\partial^2 f}{\partial x_{2i-1}\partial x_{2i}}(\bx)+\frac{\partial^2 f}{\partial x^2_{2i}}(\bx)\right]\,\dd\bx\\
&=-\frac{n}{(t+2n-1)^2}\sum_{i=1}^n\int_{\R^{2n}}f(\bx)\left[\frac{\partial^2 F_\beta}{\partial x^2_{2i-1}}(\bx)+2(2n-1)\frac{\partial^2 F_\beta}{\partial x_{2i-1} \partial x_{2i}}(\bx)+\frac{\partial^2 F_\beta}{\partial x^2_{2i}}(\bx)\right]\,\dd\bx\\
&=-\frac{n}{(t+2n-1)^2}\sum_{i=1}^n\E\,\left[\frac{\partial^2 F_\beta}{\partial x^2_{2i-1}}(\bx)+2(2n-1)\frac{\partial^2 F_\beta}{\partial x_{2i-1}\partial x_{2i}}(\bx)+\frac{\partial^2 F_\beta}{\partial x^2_{2i}}(\bx)\right].
\end{align*}
It is easy to check that
$$
\frac{\partial^2 F_\beta}{\partial x^2_{i}}(\bx)=\beta(p_i(\bx)-p_i^2(\bx)),
\quad\quad
\frac{\partial^2 F_\beta}{\partial x_{i}\partial x_{j}}(\bx)=-\beta p_i(\bx)p_j(\bx),
\quad i\ne j,
$$
where
$$
p_i(\bx):=\frac{\partial F_\beta}{\partial x_{i}}(\bx)=\frac{\eee^{\beta x_i}}{\sum_{k=1}^{2n}\eee^{\beta x_k}}.
$$
Thus,
\begin{align}
\lefteqn{-\frac{(t+2n-1)^2}{n\beta} \cdot  \frac{\partial\varphi_\beta}{\partial t}=} \label{2230}\\
&=
\sum_{i=1}^{2n}\E\, [p_{i}(\xi(t))] - \sum_{i=1}^{2n} \E\,[p_{i}^2(\xi(t))] - 2 (2n-1) \sum_{i=1}^{n} \E\, [p_{2i-1}(\xi(t))p_{2i}(\xi(t))]\notag \\
&= 1-\sum_{i=1}^{2n} \E\, [p_{i}^2(\xi(t))] - 2(2n-1) \sum_{i=1}^n \E\, [p_{2i-1}(\xi(t))p_{2i}(\xi(t))]. \notag
\end{align}
As we already mentioned, we assume that $n\geq 2$. For $i=1,\dots,n$ we have
$$
\E\, [p_{2i-1}(\xi(t))p_{j}(\xi(t))]=\left\{
                                        \begin{array}{ll}
                                          \E\, [p_{1}(\xi(t))p_{2}(\xi(t))], & \hbox{$j=2i$;} \\
                                          \E\, [p_{1}(\xi(t))p_{3}(\xi(t))], & \hbox{$j\ne 2i-1,2i$.}
                                        \end{array}
                                      \right.
$$
Therefore,
\begin{align*}
\lefteqn{-\frac{(t+2n-1)^2}{n\beta} \cdot  \frac{\partial\varphi_\beta}{\partial t}=} \\
&
1- \E\left[\sum_{i=1}^{2n}p_{i}(\xi(t))\right]^2-2n(2n-2)\E\, [p_{1}(\xi(t))p_{2}(\xi(t))]+2n(2n-2)\E\, [p_{1}(\xi(t))p_{3}(\xi(t))]\notag \\
&=-2n(2n-2)\E\, [p_{1}(\xi(t))p_{2}(\xi(t))]+2n(2n-2)\E\, [p_{1}(\xi(t))p_{3}(\xi(t))]. \notag
\end{align*}
Thus, to show that $ \frac{\partial\varphi_\beta}{\partial t}\le0$ and to complete the proof it is sufficient to prove that
$$
\E\, [p_{1}(\xi(t))p_{2}(\xi(t))]\le \E\, [p_{1}(\xi(t))p_{3}(\xi(t))],
$$
which is equivalent to
$$
\E\,\left[\frac{\eee^{\beta \xi_1(t)}(\eee^{\beta \xi_2(t)}-\eee^{\beta \xi_3(t)})}{(\sum_{i=1}^{2n}\eee^{\beta \xi_i(t)})^2}\right]\le0.
$$
Since the vectors $(\xi_1(t),\xi_2(t),\xi_3(t),\xi_4(t))$  and $(\xi_4(t),\xi_3(t),\xi_2(t),\xi_1(t))$ are equidistributed and independent from  $(\xi_5(t),\dots,\xi_{2n}(t))$, the last inequality is equivalent to
$$
\E\,\left[\frac{\eee^{\beta \xi_4(t)}(\eee^{\beta \xi_3(t)}-\eee^{\beta \xi_2(t)})}{(\sum_{i=1}^{2n}\eee^{\beta \xi_i(t)})^2}\right]\le0.
$$
Since the left hand sides of two last inequalities are equal, summing them up, we observe that it is enough to prove
\begin{equation}\label{1434}
\E\,\left[\frac{(\eee^{\beta \xi_1(t)}-\eee^{\beta \xi_4(t)})(\eee^{\beta \xi_2(t)}-\eee^{\beta \xi_3(t)})}{(\sum_{i=1}^{2n}\eee^{\beta \xi_i(t)})^2}\right]\le0.
\end{equation}
By the construction of vector $\xi(t)$, we have
$$
\frac{\E\,[\xi_{1}(t)\xi_{2}(t)]}{\sqrt{\E\,[\xi^2_{1}(t)]\E\,[\xi^2_{2}(t)]}}=-t
$$
and
$$
\sigma^2:=\E\,[\xi_i^2(t)]=\frac{2n}{t+2n-1}.
$$
Denote by $h(x_1,x_2,x_3,x_4)$ the probability density function of $(\xi_1(t),\xi_2(t),\xi_3(t),\xi_4(t))$:
\begin{equation}\label{1436}
h(x_1,x_2,x_3,x_4)=\frac{1}{4\pi^2\sigma^4(1-t^2)}\exp\left[-\frac{1}{2\sigma^2(1-t^2)}(x_1^2+x_2^2+x_3^2+x_4^2+2t(x_1x_2+x_3x_4))\right].
\end{equation}
Consider subsets $A,B\subset\R^4$ defined as
$$
A=\{(x_1,x_2,x_3,x_4)\in\R^4\,:\,(x_1-x_4)(x_2-x_3)>0\},
$$
$$
B=\{(x_1,x_2,x_3,x_4)\in\R^4\,:\,(x_1-x_4)(x_2-x_3)<0\}.
$$
We have
\begin{multline*}
\E\,\left[\frac{(\eee^{\beta \xi_1(t)}-\eee^{\beta \xi_4(t)})(\eee^{\beta \xi_2(t)}-\eee^{\beta \xi_3(t)})}{(\sum_{i=1}^{2n}\eee^{\beta \xi_i(t)})^2}\right]
\\=\E\,\left[\int_A\frac{(\eee^{\beta x_1}-\eee^{\beta x_4})(\eee^{\beta x_2}-\eee^{\beta x_3})}{(\eee^{\beta x_1}+\eee^{\beta x_2}+\eee^{\beta x_3}+\eee^{\beta x_4}+\sum_{i=5}^{2n}\eee^{\beta \xi_i(t)})^2}h(x_1,x_2,x_3,x_4)\dd x_1\dd x_2\dd x_3\dd x_4\right]
\\+\E\,\left[\int_B\frac{(\eee^{\beta x_1}-\eee^{\beta x_4})(\eee^{\beta x_2}-\eee^{\beta x_3})}{(\eee^{\beta x_1}+\eee^{\beta x_2}+\eee^{\beta x_3}+\eee^{\beta x_4}+\sum_{i=5}^{2n}\eee^{\beta \xi_i(t)})^2}h(x_1,x_2,x_3,x_4)\dd x_1\dd x_2\dd x_3\dd x_4\right].
\end{multline*}
Interchanging $x_2$ and $x_3$ in the second term, we get
\begin{multline*}
\E\,\left[\int_B\frac{(\eee^{\beta x_1}-\eee^{\beta x_4})(\eee^{\beta x_2}-\eee^{\beta x_3})}{(\eee^{\beta x_1}+\eee^{\beta x_2}+\eee^{\beta x_3}+\eee^{\beta x_4}+\sum_{i=5}^{2n}\eee^{\beta \xi_i(t)})^2}h(x_1,x_2,x_3,x_4)\dd x_1\dd x_2\dd x_3\dd x_4\right]
\\=-\E\,\left[\int_A\frac{(\eee^{\beta x_1}-\eee^{\beta x_4})(\eee^{\beta x_2}-\eee^{\beta x_3})}{(\eee^{\beta x_1}+\eee^{\beta x_2}+\eee^{\beta x_3}+\eee^{\beta x_4}+\sum_{i=5}^{2n}\eee^{\beta \xi_i(t)})^2}h(x_1,x_3,x_2,x_4)\dd x_1\dd x_2\dd x_3\dd x_4\right],
\end{multline*}
which yields
\begin{multline*}
\E\,\left[\frac{(\eee^{\beta \xi_1(t)}-\eee^{\beta \xi_4(t)})(\eee^{\beta \xi_2(t)}-\eee^{\beta \xi_3(t)})}{(\sum_{i=1}^{2n}\eee^{\beta \xi_i(t)})^2}\right]
\\=\E\,\left[\int_A\frac{(\eee^{\beta x_1}-\eee^{\beta x_4})(\eee^{\beta x_2}-\eee^{\beta x_3})(h(x_1,x_2,x_3,x_4)-h(x_1,x_3,x_2,x_4))}{(\eee^{\beta x_1}+\eee^{\beta x_2}+\eee^{\beta x_3}+\eee^{\beta x_4}+\sum_{i=5}^{2n}\eee^{\beta \xi_i(t)})^2}\dd x_1\dd x_2\dd x_3\dd x_4\right].
\end{multline*}
Since the exponent is increasing function and $\beta>0$, we have for $(x_1,x_2,x_3,x_4)\in A$
$$
(\eee^{\beta x_1}-\eee^{\beta x_4})(\eee^{\beta x_2}-\eee^{\beta x_3})\ge0.
$$
Thus, to get~\eqref{1434} it is enough to show that for $(x_1,x_2,x_3,x_4)\in A$
$$
h(x_1,x_2,x_3,x_4)-h(x_1,x_3,x_2,x_4)\le 0.
$$
Indeed, using~\eqref{1436} we obtain that for $(x_1,x_2,x_3,x_4)\in A$
\begin{align*}
\frac{h(x_1,x_3,x_2,x_4)}{h(x_1,x_2,x_3,x_4)}&=\exp\left[-\frac{t}{\sigma^2(1-t^2)}(x_1x_3+x_2x_4-x_1x_2-x_3x_4)\right]
\\&=\exp\left[\frac{t}{\sigma^2(1-t^2)}(x_1-x_4)(x_2-x_3)\right]\ge 1,
\end{align*}
which completes the proof.

\hfill$\Box$

\section{Proof of Theorem~\ref{1053_asympt}}\label{sec:proof2}
Recall that both $A_n:=\E\,\max_{1\leq i\leq n} |\eta_i|$  and $B_n := \E\,\max_{1\leq i\leq 2n} \eta_i$ are asymptotically equivalent to $\sqrt{2\log n}$. Therefore, in order to prove the theorem, we need to show that
\begin{equation}\label{eq:A_n_minus_B_n}
\lim_{n\to\infty}   4 n\sqrt{2\log n}\, \left(\E\,\max_{1\leq i\leq n} |\eta_i| - \E\,\max_{1\leq i\leq 2n} \eta_i\right) =1.
\end{equation}
Denote the tail function of the standard normal distribution by
$$
 \bar \Phi (t) = \int_t^{\infty} \eee^{-s^2/2}\, \frac{\dd s}{\sqrt{2\pi}}.
$$
It is well known~\cite[p.~9]{AT09} or~\cite[p.~137]{Chatterjee_book} that for $t>0$ one has
\begin{equation}\label{est}
\frac{1}{\sqrt{2\pi}} \left(\frac{1}{t} - \frac{1}{t^3}\right)  \, \eee^{-t^2/2}
\leq
\bar \Phi (t)
\leq
\frac{1}{\sqrt{2\pi}\, t} \, \eee^{-t^2/2}.
\end{equation}
The distribution functions of the maxima we are interested in are given by
\begin{align}
F_n(t)
&:=
\P \left[\max _{1\leq i\leq n}|\eta _i|\leq t \right]
=
\left(\int_{-t}^{t} \eee^{-s^2/2}\, \frac{\dd s}{\sqrt{2\pi}} \right)^n
=
(1-2\bar \Phi(t))^n, \quad t\geq 0, \label{eq:F_n_def}\\
G_n (t)
&:=
\P \left[\max _{1\leq i\leq 2n}\eta _i \leq t\right]
=
\left(\int_{-\infty}^{t} \eee^{-s^2/2}\, \frac{\dd s}{\sqrt{2\pi}} \right)^{2n}
=(1-\bar \Phi(t))^{2n}, \quad t\in\R. \label{eq:G_n_def}
\end{align}
It follows that
\begin{align*}
A_n
&=
\E\,\max_{1\leq i\leq n} |\eta_i|
 =
\int_0^{\infty} (1- F_n(t)) \, \dd t,\\
B_n
&=
\E\,\max_{1\leq i\leq 2n}\eta_i
=
\int_0^{\infty} (1- G_n(t)) \, \dd t - \int_0^{\infty} (\bar \Phi(t))^{2n} \, \dd t.
\end{align*}
To prove the second equality, note that for $M := \max _{1\leq i\leq 2n}\eta _i$ we have $M=M_+-M_-$ with $M_+=\max(M,0)$, $M_- = \max(-M,0)$, and
\begin{align*}
  \P \left[M_+ > t  \right]
  &= 1-G_n(t),\quad t > 0,\\
  \P \left[M_{-} > t  \right] &= \P \left[M <-t   \right]
  =(1-\bar \Phi(-t))^{2n} = (\bar \Phi(t))^{2n},\quad t > 0.
\end{align*}

In fact, the second integral in the formula for $B_n$ is negligible. Indeed, noting that $\bar \Phi(0)=1/2$ and using~\eqref{est} we obtain
\begin{align*}
\int_0^{\infty} (\bar \Phi(t))^{2n}\, \dd t
&\leq  (\bar \Phi(0))^{2n} +
\int_1^{\infty} (\bar \Phi(t))^{2n}\, \dd t
\leq
2^{-2n} +  \int_1^{\infty}
   \left(2\pi \right)^{-n} t^{-2n}
   \eee^{-t^2 n} \dd t\\
&\leq  2^{-2n} + \left(2\pi \eee \right)^{-n} \leq 2^{-n}.
\end{align*}

In view of the above considerations, in order to prove~\eqref{eq:A_n_minus_B_n} it suffices to show that
$$
\lim_{n\to\infty} 4 n\sqrt{2\log n} \int_{0}^{\infty} (G_n(t)-F_n(t)) \,\dd t = 1.
$$
After a change of variable $t := t_n + \frac a {t_n}$, $a\in\R$, where $t_n\sim \sqrt{2\log n}$ is the solution to the equation
\begin{equation}\label{eq:def_t_n}
\bar \Phi(t_n)= \frac 1{2n},
\end{equation}
our task reduces to proving that
\begin{equation}\label{eq:int1}
\lim_{n\to\infty}   \int_{-t_n^2}^{\infty} 4n \left(G_n\left(t_n + \frac a {t_n}\right)-F_n\left(t_n + \frac a {t_n}\right)\right) \,\dd a = 1.
\end{equation}

First we prove the pointwise convergence of the function under the integral sign. If $a\in\R$ is fixed and $n\to\infty$, then by~\eqref{est} and~\eqref{eq:def_t_n},
\begin{equation}\label{eq:r_n_asympt}
r_n := \bar \Phi\left(t_n + \frac a {t_n}\right)
\sim
\frac 1 {\sqrt{2\pi}\,t_n} \eee^{-\frac 12 \left(t_n + \frac a {t_n}\right)^2}
\sim
\frac 1 {\sqrt{2\pi}\,t_n} \eee^{-\frac 12 t_n^2} \eee^{-a}
\sim
\frac{\eee^{-a}}{2n}.
\end{equation}
Recalling the formulas for $F_n$ and $G_n$, see~\eqref{eq:F_n_def}, \eqref{eq:G_n_def}, we can write
\begin{align*}
F_n\left(t_n + \frac a {t_n}\right)
&=(1-2r_n)^{n}
=\eee^{n \log (1 - 2r_n)},\\
\quad
G_n\left(t_n + \frac a {t_n}\right)
&=(1-r_n)^{2n}
=\eee^{2n \log (1 - r_n)}.
\end{align*}
Using~\eqref{eq:r_n_asympt} and the Taylor series for the logarithm and the exponent, we obtain
\begin{align*}
F_n\left(t_n + \frac a {t_n}\right)
&=
\exp\left\{-n \left(2r_n + 2 r_n^2 + o\left(\frac 1{n^2}\right) \right)\right\}
=
\eee^{-2n r_n} \left(1- 2 n r_n^2+o\left(\frac 1{n}\right)\right),\\
G_n\left(t_n + \frac a {t_n}\right)
&=
\exp\left\{-2n \left(r_n + \frac {r_n^2} 2 + o\left(\frac 1{n^2}\right) \right)\right\}
=
\eee^{-2n r_n} \left(1 - n r_n^2+o\left(\frac 1{n}\right)\right).
\end{align*}
Subtracting both expansions and using~\eqref{eq:r_n_asympt} twice, we obtain
$$
4n \left(G_n\left(t_n + \frac a {t_n}\right)-F_n\left(t_n + \frac a {t_n}\right)\right)
 =
4n \eee^{-2n r_n} \left( n r_n^2+o\left(\frac 1{n}\right)\right)
\ton
\eee^{-\eee^{-a}} \eee^{-2a}.
$$
If we allow for a moment interchanging the limit and the integral, the limit in~\eqref{eq:int1} equals
$$
{\text{LHS of }}\eqref{eq:int1}
=
\int_{-\infty}^{+\infty} \eee^{-\eee^{-a}} \eee^{-2a} \,\dd a
=
\int_0^{\infty} \eee^{-y} y \,\dd y = 1,
$$
where we used the change of variable $y=\eee^{-a}$.

To complete the proof we need to justify the use of the Lebesgue dominated convergence theorem. It suffices to show that for some integrable function $g(a)$,
\begin{equation}\label{eq:dominated1}
0\leq n \left(G_n\left(t_n + \frac a {t_n}\right)- F_n\left(t_n + \frac a {t_n}\right)\right) \leq g(a), \quad a > - \frac 14t_n^2, \quad n\in\N,
\end{equation}
and
\begin{equation}\label{eq:dominated2}
\lim_{n\to\infty} n \int_{-t_n^2}^{-t_n^2/4} \left(G_n\left(t_n + \frac a {t_n}\right)-F_n\left(t_n + \frac a {t_n}\right)\right) \, \dd a = 0.
\end{equation}

The non-negativity of $G_n-F_n$ is a consequence of the inequality $(1-z)^2 \geq 1-2z$; see~\eqref{eq:F_n_def}, \eqref{eq:G_n_def}.
Now we prove the upper bound in~\eqref{eq:dominated1}. Using the inequality
$$y^n-x^n \leq n(y-x) y^{n-1}$$ for $0\leq x\leq y$, we obtain that
\begin{equation}\label{eq:g_n_f_n}
G_n(t) - F_n(t)
=
(1-2\bar \Phi(t) +\bar \Phi^2(t))^n - (1-2\bar \Phi(t))^n
\leq n
\bar \Phi^2 (t) (1-\bar \Phi(t))^{2n-2}.
\end{equation}
In the following, $C, C_1,\ldots>0$ denote absolute constants. Let first  $a>-\frac 14 t_n^2$ so that $t_n+\frac a{t_n}> \frac 34 t_n$.  By~\eqref{est} and~\eqref{eq:def_t_n},
\begin{equation}\label{eq:bar_Phi_upper}
\bar \Phi\left(t_n + \frac a {t_n}\right)
\leq
\frac {C_1} {t_n+\frac a{t_n}} \eee^{-\frac 12 \left(t_n + \frac a {t_n}\right)^2}
\leq
\frac {4C_1} {3t_n} \eee^{-\frac 12 t_n^2} \eee^{-a}
\leq
\frac{C_2}{n} \eee^{-a}.
\end{equation}
On the other hand, if $a\in [-\frac 14 t_n^2,0]$, then again using~\eqref{est} and~\eqref{eq:def_t_n} we obtain
\begin{equation}\label{eq:bar_Phi_lower}
\bar \Phi\left(t_n + \frac a {t_n}\right)
\geq
\frac {C_1'} {t_n+\frac a{t_n}} \eee^{-\frac 12 \left(t_n + \frac a {t_n}\right)^2}
\geq
\frac {C_1'} {t_n} \eee^{-\frac 12 t_n^2} \eee^{-a} \eee^{-\frac{a^2}{2t_n^2}}
\geq
\frac{C_2'}{n} \eee^{-\frac 78 a},
\end{equation}
where in the last estimate we used that $-\frac{a^2}{2t_n^2} \geq \frac 18 a$.

Note that because of $-a \leq \frac 14 t_n^2 \sim \frac 12\log n$, the right hand-side of~\eqref{eq:bar_Phi_upper} can be estimated above by $C n^{-1/4}$.
Using the inequality $1-x\leq \eee^{-\frac 12x}$ (which is valid for sufficiently small $x>0$) together with~\eqref{eq:bar_Phi_lower}, we  obtain that for $a\in [-\frac 14 t_n^2,0]$,
$$
\left(1-\bar \Phi\left(t_n+\frac a {t_n}
\right)\right)^{2n-2}
\leq
\eee^{- (n-1) \frac{C_2'}{n} \eee^{-\frac 78 a}}
\leq
\eee^{- C' \eee^{-\frac 78 a}}.
$$
It follows from this and~\eqref{eq:g_n_f_n}, \eqref{eq:bar_Phi_upper} that for all $a>-\frac 12 t_n^2$,
\begin{align*}
n (G_n-F_n)\left(t_n+\frac a {t_n}\right)
&\leq
n^2\bar \Phi^2 \left(t_n+\frac a {t_n}\right) \left(1-\bar \Phi\left(t_n+\frac a {t_n}
\right)\right)^{2n-2}\\
&\leq C'' \eee^{-2a} \eee^{- C'\eee^{-\frac 78 a}},
\end{align*}
where in the case $a>0$ we used  the trivial estimate $1-\bar \Phi(t)\leq 1$. The function on the right-hand side is integrable in $a$, thus completing the proof of~\eqref{eq:dominated1}.

We turn now to~\eqref{eq:dominated2}.  Using again~\eqref{eq:g_n_f_n}, the trivial estimate $\bar \Phi(t)\leq 1$, and the increasing property of $1-\bar \Phi(t)$, we obtain that
\begin{align*}
I_n := n \int_{-t_n^2}^{-t_n^2/4} (G_n-F_n)\left(t_n + \frac a {t_n}\right) \, \dd a
\leq
n^2 t_n^2 \left(1-\bar \Phi\left(\frac 34 t_n^2\right)\right)^{2n-2}.
\end{align*}
Recall now that $t_n^2\sim 2\log n$ and use~\eqref{eq:bar_Phi_lower} which implies that for some $\eps\in (0,1)$,
$$
\bar \Phi\left(\frac 34 t_n^2\right) \geq C n^{-\eps}, \text{ but } \lim_{n\to\infty}\bar \Phi\left(\frac 34 t_n^2\right) =0.
$$
Again using inequality $1-x\leq \eee^{-\frac 12x}$ (valid for small $x>0$), we obtain
$$
I_n \leq C n^2 (\log n) \eee^{- C n^{-\eps} (n-1)}  \ton 0,
$$
thus proving~\eqref{eq:dominated2}. \hfill $\Box$

\section{Proofs of Theorems~\ref{theo:shadow_cube_limit_distr} and~\ref{theo:shadow_simpl_cross_limit_distr}}\label{sec:proofs3}
Both proofs rely on the observation that a random vector $U$ distributed uniformly on $\S^{n-1}$ can be represented as
\begin{equation}\label{eq:U_rep}
U \eqdistr \left(\frac{\eta_1}{\sqrt{\eta_1^2 + \ldots +\eta_n^2}},\ldots, \frac{\eta_n}{\sqrt{\eta_1^2 + \ldots +\eta_n^2}} \right).
\end{equation}
\begin{proof}[Proof of Theorem~\ref{theo:shadow_cube_limit_distr}]
It follows from the definition of $W_{Q_n}$, see~\eqref{eq:W_def_repeat}, and from the central symmetry of the cube that
\begin{equation}\label{eq:W_Q_n_rep}
W_{Q_n} = 2 \sup_{t\in Q_n} \langle t, U \rangle \eqdistr \frac {|\eta_1|+\ldots+|\eta_n|}{\sqrt{\eta_1^2 + \ldots +\eta_n^2}}.
\end{equation}
Consider a random vector $(X_n,Y_n)$ with
\begin{equation}\label{eq:def_X_n_Y_n}
X_n := \frac{ |\eta_1| + \ldots + |\eta_n| - n \mu }{\sigma\sqrt n},
\quad
Y_n :=  \frac{ \eta_1^2 + \ldots + \eta_n^2 - n}{v\sqrt n},
\end{equation}
where
\begin{align}
\mu &:= \E |\eta_1| = \sqrt{2/\pi}, \label{eq:def_mu}\\
\sigma^2 &:= \Var |\eta_1| = \E [\eta_1^2] - (\E |\eta_1|)^2 = (\pi-2)/\pi, \label{eq:def_sigma}\\
v^2 &:= \Var (\eta_1^2) = \E [\eta_1^4] - (\E[\eta_1^2])^2 = 2. \label{eq:def_v}
\end{align}
Note that $\E X_n=\E Y_n = 0$ and  $\Var X_n=\Var Y_n=1$, while
\begin{equation}\label{eq:def_r}
r := \Cov (X_n,Y_n) = \frac{n \Cov (|\eta_1| ,\eta^2_1)}{\sigma v n} = \frac 1 {\sqrt{\pi-2}},
\end{equation}
where we used that $\E |\eta_1^3| = 2\sqrt{2/\pi}$. By the bivariate central limit theorem,
\begin{equation}\label{eq:CLT_bivariate}
(X_n, Y_n) \todistr (X,Y),
\end{equation}
where $(X,Y)$ is a bivariate Gaussian vector with standard margins and covariance $r$.
It follows from~\eqref{eq:def_X_n_Y_n} that
$$
W_{Q_n}\eqdistr
\frac {|\eta_1|+\ldots+|\eta_n|}{\sqrt{\eta_1^2 + \ldots +\eta_n^2}}
=
\frac{\mu n + \sigma \sqrt n X_n}{\sqrt{n + v\sqrt n Y_n}}
=
\frac{\mu n \left(1+ \frac{\sigma  X_n}{\mu\sqrt n}\right)}{\sqrt n\sqrt{1 + \frac{v Y_n}{\sqrt n}}}.
$$
Letting $n\to\infty$, expanding into a Taylor series around $0$ and noting that $X_n=O_P(1)$, $Y_n=O_P(1)$,  we get
$$
\frac {|\eta_1|+\ldots+|\eta_n|}{\sqrt{\eta_1^2 + \ldots +\eta_n^2}}
=
\mu \sqrt n \left( 1 + \frac 1 {\sqrt n}\left(\frac{\sigma X_n}{\mu} - \frac {v Y_n} 2\right) + O_P\left(\frac 1n\right)\right),
\quad n\to\infty.
$$
It follows that
$$
\frac {|\eta_1|+\ldots+|\eta_n|}{\sqrt{\eta_1^2 + \ldots +\eta_n^2}} -\mu \sqrt n
=
\sigma X_n - \frac 12 \mu v Y_n + O_P\left(\frac 1 {\sqrt n}\right),
\quad n\to\infty.
$$
Note that by the bivariate central limit theorem~\eqref{eq:CLT_bivariate}, the sequence $\sigma X_n - \frac 12 \mu v Y_n$ has limiting normal distribution with  mean zero and variance
$$
\Var \left[\sigma X_n - \frac 12 \mu v Y_n\right] = \sigma^2 +\frac 14 \mu^2 v^2 - \sigma \mu v r = \frac{\pi-3}{\pi},
$$
where we used~\eqref{eq:def_mu}, \eqref{eq:def_sigma}, \eqref{eq:def_v}, \eqref{eq:def_r}. Recalling~\eqref{eq:W_Q_n_rep} we obtain
$$
W_{Q_n} - \sqrt{\frac{2n}{\pi}} \eqdistr \frac {|\eta_1|+\ldots+|\eta_n|}{\sqrt{\eta_1^2 + \ldots +\eta_n^2}} -\mu \sqrt n
\todistr {\mathcal{N}} \left(0,\frac{\pi-3}{\pi}\right),
$$
which proves the claim.
\end{proof}
\begin{remark}
Self-normalized or studentized sums of the form
$$
R_n := \frac{\zeta_1+\ldots+\zeta_n}{\sqrt{\zeta_1^2+\ldots+\zeta_n^2}}
\quad \text{ or } \quad
T_n := 
\frac{\zeta_1+\ldots+\zeta_n}{\sqrt{(\zeta_1-\bar \zeta_n)^2+\ldots+(\zeta_n-\bar \zeta_n)^2}},
$$
where $\zeta_1,\zeta_2,\ldots$ are i.i.d.\ random variables and $\bar \zeta_n= \frac 1n (\zeta_1+\ldots+\zeta_n)$, have been extensively studied in the literature; see, e.g.,~\cite{pena_lai_shao_book}, with main emphasis on the central case $\E [\zeta_i] = 0$. The non-central case $\E [\zeta_i] \neq 0$ has been analyzed by~\citet{bentkus_etal} (who studied $T_n$) and by~\citet{omey_van_gulck} (who studied $1/R_n^2$ and related quantities). After some calculations, our central limit theorem for $W_{Q_n}$ could be deduced from~\cite[Theorem~3.1(v)]{omey_van_gulck}, but since these authors studied $1/R_n^2$ instead of $R_n$ it was easier to provide a direct proof.
\end{remark}

\begin{proof}[Proof of Theorem~\ref{theo:shadow_simpl_cross_limit_distr}]
We prove~\eqref{eq:limit_width_simpl}. Using representation~\eqref{eq:U_rep}, we obtain
\begin{equation}\label{eq:width_simplex_repre}
W_{S_{n-1}} \eqdistr \frac{\max_{1\leq i\leq n}\eta_i - \min_{1\leq i\leq n}\eta_i}{\sqrt{\eta_1^2+\ldots +\eta_n^2}}.
\end{equation}
It is known from extreme-value theory  that the range of the standard normal sample satisfies
\begin{equation}\label{eq:def_Z_n}
Z_n := u_n\left(\max_{1\leq i\leq n}\eta_i - \min_{1\leq i\leq n}\eta_i -  2u_{n}\right)\todistr G_1+G_2,
\end{equation}
where $u_n\sim\sqrt{2\log n}$ is as in~\eqref{eq:def_u_n}. In fact, this follows from the asymptotic independence~\cite[Theorem~1.8.3 on p.~28]{leadbetter_etal_book} of $\max_{1\leq i\leq n}\eta_i$ and $-\min_{1\leq i\leq n}\eta_i$ which both have limiting Gumbel distribution as in~\eqref{eq:max_gumbel1}.
Define $Y_n$ as in~\eqref{eq:def_X_n_Y_n} and observe that $Y_n$ has limiting standard normal distribution by the central limit theorem. We have
$$
\frac{\max_{1\leq i\leq n}\eta_i - \min_{1\leq i\leq n}\eta_i}{\sqrt{\eta_1^2+\ldots +\eta_n^2}}
=
\frac{2u_n + \frac{Z_n}{u_n}}{\sqrt{n + \sqrt{2n} Y_n}}
=
\frac{2u_n}{\sqrt n} \frac{1 + \frac{Z_n}{2u_n^2}}{\sqrt{1 + \sqrt{2/n} Y_n}}.
$$
Noting that both $Z_n$ and $Y_n$ are $O_P(1)$ and expanding into a Taylor series, we obtain
$$
\frac{\max_{1\leq i\leq n}\eta_i - \min_{1\leq i\leq n}\eta_i}{\sqrt{\eta_1^2+\ldots +\eta_n^2}}
=
\frac{2u_n}{\sqrt n} \left(1 + \frac{Z_n}{2u_n^2} + O_P\left(\frac 1{u_n^4}\right)\right),
$$
where we have used that $u_n\sim \sqrt{2\log n}$ and hence, the term with $Y_n$ is negligible. It follows from~\eqref{eq:def_Z_n} that
$$
u_n \sqrt n  \left(\frac{\max_{1\leq i\leq n}\eta_i - \min_{1\leq i\leq n}\eta_i}{\sqrt{\eta_1^2+\ldots +\eta_n^2}}  - \frac{2u_n}{\sqrt n}\right)
\todistr
G_1+G_2,
$$
which, in view of~\eqref{eq:width_simplex_repre}, implies~\eqref{eq:limit_width_simpl}.

The proof of~\eqref{eq:limit_width_cross} is analogous but instead of~\eqref{eq:W_Q_n_rep} it uses the representation
\begin{equation}\label{eq:width_cross_repre}
W_{C_{n}} \eqdistr \frac{2\max_{1\leq i\leq n}|\eta_i|}{\sqrt{\eta_1^2+\ldots +\eta_n^2}}.
\end{equation}
together with the limit relation
\begin{equation}\label{eq:def_Z_n_abs_value}
Z_n' := u_n \left(\max_{1\leq i\leq n}|\eta_i| - u_{2n}\right)\todistr G_1
\end{equation}
following from the asymptotic independence of the maximum and the minimum.
\end{proof}

{\bf Acknowledgments.} We are grateful to Manjunath Krishnapur and Tulasi Ram Reddy Annapareddy for noticing a gap in the first version of the proof of Theorem~\ref{1053}. We also wish to thank  Peter Pivovarov for bringing~\cite{PP13} to our attention.

\bibliographystyle{plainnat}
\bibliography{simpl_cross}

\end{document}